\documentclass{article}
\addtolength{\voffset}{-1.5cm}
\addtolength{\textheight}{2cm}
\addtolength{\hoffset}{-1cm}
\addtolength{\textwidth}{2cm}

\usepackage{amsfonts,amsmath,amscd,amsthm,graphicx}

\newtheorem{prop}{Proposition}

\newtheorem{lem}{Lemma}

\def \ee {\begin{equation}}
\def \eee {\end{equation}}

\newcommand{\Sy}{\mathrm{S^+(p,n)}}

\newcommand{\RR}{{\mathbb R}}

\newcommand{\norm}[1]{\lVert#1\rVert}

\newcommand{\tr}[1]{\text{Tr}\left(#1\right)}

\usepackage{xcolor}

\begin{document}

\title{Rank-preserving geometric means of positive semi-definite matrices}

\author{Silv\`{e}re Bonnabel\thanks{S. Bonnabel is with Robotics center, Math\'{e}matiques et Syst\`{e}mes,
    Mines ParisTech, Boulevard Saint-Michel 60, 75272 Paris, France, silvere.bonnabel@mines-paristech.fr.}, Anne Collard, Rodolphe Sepulchre \thanks{A. Collard and R. Sepulchre are with Departement of Electrical Engineering and Computer Science,
    University of Li\`{e}ge, 4000 Li\`{e}ge, Belgium, anne.collard@ulg.ac.be, r.sepulchre@ulg.ac.be.}}

  \maketitle

\begin{abstract}
{The generalization of the geometric mean of positive scalars to positive definite matrices has attracted considerable attention since the seminal work of Ando. The paper generalizes this framework of matrix means by proposing the definition of a rank-preserving mean for two or an arbitrary number of positive semi-definite matrices of fixed rank. The proposed mean is shown to be geometric in that it satisfies all the expected properties of a rank-preserving geometric mean.   The work is motivated by operations on low-rank approximations of positive definite matrices in high-dimensional spaces. }
  \end{abstract}

\paragraph{Keywords}Matrix means,  geometric mean,  positive semi-definite matrices,
Riemannian geometry,  symmetries,  singular value decomposition,  principal angles.

\section{Introduction}
Positive definite matrices have become fundamental
computational objects in
many areas of  engineering and applied mathematics. They appear as covariance
matrices in statistics, as variables in convex and
semidefinite programming, as unknowns of important matrix (in)equalities  in systems and control theory, as kernels in machine learning, and as diffusion tensors in medical imaging, to cite a few. These applications have motivated the development of  differential calculus over positive definite matrices.  As a most basic operation, this calculus requires the proper definition of a mean.  In particular, much research has been devoted to the matrix generalization of  the geometric mean $\sqrt{ab}$ of two positive numbers $a$ and $b$  (see for instance Chapter 4 in \cite{bhatia06} for an expository and insightful treatment of the subject).  The further extension of a geometric mean from two to an arbitrary number of positive definite matrices is an active current research area \cite{ando, petz,bhatia-2006,LL2011,BK2011,holbrook,Bini,lim2}.
It has been increasingly recognized that from a theoretical point of view \cite{faraut} as well as in numerous  applications \cite{pennec-06,fletcher,moakher06,moakher05,ando,arsigny,petz,burbea-rao,skovgaard,smith-2005}, matrix geometric means  are to be preferred to their arithmetic counterparts for developing  a calculus in the cone of positive definite matrices.

The fundamental and axiomatic approach of Ando \cite{ando} (see also \cite{petz}) reserves the adjective ``geometric"  to a definition of mean that enjoys (at least) all the following properties:
\begin{itemize}
\item[(P1)] Consistency with scalars. If $A$ { and} $B$ commute, {then} $M(A,B)=(AB)^{1/2}$.
\item [(P2)] Joint homogeneity. For $\alpha,\beta>0$ $$M(\alpha A,\beta B)=(\alpha\beta)^{1/2}M(A,B).$$
\item [(P3)] Permutation invariance. $M(A,B)=M(B,A).$
\item [(P4)] Monotonicity. If $A\leq A_0$ (i.e. ($A_0-A$) is a positive matrix) and $B\leq B_0$, the means are comparable and verify $M(A,B)\leq M(A_0, B_0)$.
\item [(P5)] Continuity from above. If $\{A_n\}$ and $\{B_n\}$ are monotonic decreasing sequence (in the Lowner matrix ordering) converging to $A$, $B$ then we have $\lim M(A_n, B_n)=M(A,B)$.
\item [(P6)] Congruence invariance. For any $G\in \mathrm{Gl(n)}$, we have $M(GAG^T , GBG^T)=GM(A,B)G^T$.
\item [(P7)] Self-duality. $M(A,B)^{-1}  =M(A^{-1},B^{-1})$.
\end{itemize}

The present paper seeks to  extend  geometric means defined on  the open cone  $\mathrm{P_n}$ to the the set of positive semi-definite matrices of fixed rank $p$, denoted by $\Sy$, which lies on the boundary of $\mathrm{P_n}$. Our motivation is primarily computational: with the growing use of low-rank approximations of matrices as a way to retain  tractability in large-scale applications, there is a need to extend the calculus of positive definite matrices to their low-rank counterparts.  The classical approach in the literature is to extend the definition of a mean from the interior of the cone to the boundary of the cone by a continuity argument. As a consequence, this topic has not received much attention. This approach has however serious limitations from a computational viewpoint because it is not rank-preserving. For instance Ando's geometric mean of two semi-definite matrices having rank $p<n/2$  is almost surely null {with this definition}.

We depart from this approach by viewing a rank $p$ positive semi-definite matrix as the projection of a positive definite matrix in a $p$-dimensional subspace. Our proposed mean lies in the mean subspace, a well-defined and classical concept. The proposed mean is rank-preserving, and it possesses all the properties listed above, {modulo a few adaptations imposed by a rank-preserving concept}:  (P1) is impossible to retain {unless}  the rank of $AB$ is {equal to} the rank of $A$ and $B$. Indeed, as  the mean must preserve the rank, it can not be equal to $(AB)^{1/2}$ if the latter condition is not satisfied. In turn, as $A$ and $B$ are supposed to commute it means that they  must be supported by the same subspace. {Also} (P6) will be shown to be impossible to retain when the rank is preserved. Indeed it is this property {that causes} Ando's geometric mean of two matrices of sufficiently small rank {to be} almost surely null.  In (P7) inversion must obviously  be replaced with pseudo-inversion. Letting $A\circ B$ denote the desired  mean in $\Sy$, we suggest to replace those three properties  with:
\begin{itemize}
\item[(P1')] Consistency with scalars. If $A,B$ commute and are supported by the same subspace, $A\circ B=(AB)^{1/2}$.
\item[(P6')] Invariance to scalings and rotations. For $(\mu,P)\in \RR^*_+ \times \mathrm{O(n)}$ we have $(\mu P^TA\mu P)\circ (\mu P^TB\mu P)=\mu P^T(A\circ B )\mu P$.
\item[(P7')] Self-duality. $(A\circ B)^{\dag}  =A^{\dag}\circ B^{\dag}$, where $\dag$ is the pseudo-inversion.
\end{itemize}

{In the recent work \cite{bonnabel-sepulchre-simax}, we used a
Riemannian framework to introduce a  geometric mean of two matrices
in $\Sy$ that was shown to satisfy those properties. The present
paper further develops the concept by providing an intuitive
characterization and a closed formula for its calculation.
Furthermore, we show that the concept extends to the definition of a
geometric mean for an arbitrary number of matrices, thereby
providing a low-rank counterpart of  recent work on positive
definite matrices  \cite{ando,petz,bhatia-2006,holbrook,Bini}.

The structure of the paper is as follows.  Sections 2 and 3 are
mainly  expository.  In Section 2, we review the theory of Ando in
the cone of positive definite matrices and we illustrate the
shortcomings of the continuity argument for a rank-preserving mean
to be defined on the boundary of the cone. In Section 3,   {we
review the Riemannian interpretation of Ando's mean of two matrices
$A$ and $B$ as the midpoint of the geodesic joining $A$ and $B$ for
the affine invariant metric of the cone and introduce the Riemannian
concept of Karcher mean. Section 4 develops the proposed  geometric
mean for an arbitrary number of matrices in  the set $\Sy$. The
geometric  properties of this mean are characterized in Section 5.
Finally, Section 6 illustrates the relevance of a rank-preserving
mean in the context of filtering.} {Preliminary results can be found
in \cite{Bonnabel:MTNS10}}.

\subsection{ Notation}
\begin{itemize}
\item $\mathrm{P_n}$ is the set of symmetric positive definite $n\times n$ matrices.
\item $\mathrm{S^+(p,n)}$ is the set of symmetric positive semidefinite $n\times n$ matrices of rank $p\leq n$.
\item $\mathrm{Sym(n)}$ is the vector space of symmetric $n\times n$  matrices.

\item $\mathrm{St(p,n)}=\mathrm{O(n)}/\mathrm{O(n-p)}$ is the  Stiefel manifold, that is, the set of $n\times p$
matrices with orthonormal columns: $U^TU=I_p$.
\item $\mathrm{Gr(p,n)}$ is  the Grassmann manifold, that is,  the set of $p$-dimensional subspaces of $\RR^n$.
It can be represented by the equivalence classes $\mathrm{St(p,n)}/\mathrm{O(p)}$.
\item span($A$) is the subspace of $\RR^n$ spanned by the columns of $A\in \RR^{n\times n}$.
\end{itemize}


\section{An analytic viewpoint: Ando's approach}

\subsection{Mean of two matrices $A_1$ and $A_2$}
For positive scalars, the homogeneity property (P2) implies
$M(a_1,a_2)=a_1M(1,a_2/a_1)=a_1f(a_2/a_1)$ with $f$ a monotone
increasing continuous function. In a non-commutative matrix setting,
one can write \ee \label{geometricmean}
M(A_1,A_2)=A_1^{1/2}f(A_1^{-1/2}A_2A_1^{-1/2})A_1^{1/2}\, . \eee
with $f$ a matrix monotone increasing  function. Several  geometric
means can be defined this way (see e.g. \cite{arsigny}). The
well-established geometric mean  of two full-rank matrices
popularized by Ando \cite{ando78,pusz,ando} corresponds to the case
$f(X)=X^{1/2}$, generalizing the scalar geometric mean. It is
defined by \ee \label{Ando}
A_1\#A_2=A_1^{1/2}(A_1^{-1/2}A_2A_1^{-1/2})^{1/2}A_1^{1/2}. \eee It
satisfies all the propositions (P1-P7) listed above. There are many
equivalent definitions of the Ando geometric mean in the literature.

A geometric mean satisfying (\ref{geometricmean}) is defined for positive definite matrices, that is, for elements in the {open} cone of positive definite matrices. Rank-deficient matrices lie on the {closure} of the cone. As a consequence, the natural idea to extend a geometric mean on the boundary is to use a continuity argument. The resulting mean satisfies all the properties above (except for (P7) that must be formulated using pseudo-inversion), but it is not rank-preserving. Indeed, let  $A_1=\text{diag(}4,\epsilon^2)$ and $A_2=\text{diag}(\epsilon^2,1)$ where  $\epsilon\ll 1$. These two matrices belong to $P_2$, and their  geometric  mean is $A_1\#A_2=$diag($2\epsilon $, $\epsilon$). In the limit (rank-deficient) situation $\epsilon\rightarrow 0$, the mean becomes the null matrix diag($0,0)$. The following proposition {shows that this example is not pathological.}
\begin{prop}\label{pp1}
If (P6) is satisfied, the geometric mean of two matrices of $\Sy$ is almost surely null if $p<n/2$.
\end{prop}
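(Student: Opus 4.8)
The plan is to use congruence invariance (P6) to move a generic pair into a canonical ``transverse'' position, and then to show that the residual symmetries, together with homogeneity, leave no room for the mean to be anything but the null matrix. The phrase \emph{almost surely} will be pinned down by a transversality count, which is exactly where the hypothesis $p<n/2$ enters.

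First I would set up the genericity. Write $\mathcal A=\mathrm{span}(A)$ and $\mathcal B=\mathrm{span}(B)$, each of dimension $p$. When $p<n/2$ we have $2p<n$, and two $p$-dimensional subspaces of $\RR^n$ in general position meet only at the origin: the set of pairs with $\mathcal A\cap\mathcal B\neq\{0\}$ is cut out by the vanishing of suitable minors, hence is a subvariety of positive codimension and has measure zero for any absolutely continuous law on $\Sy$ (e.g. $A=XX^T$ with $X$ an $n\times p$ Gaussian, so that $\mathcal A$ is uniform on $\mathrm{Gr}(p,n)$). Thus almost surely $\mathcal A\oplus\mathcal B$ is a $2p$-dimensional subspace $\mathcal V$ whose orthogonal complement $\mathcal V^\perp$ has dimension $n-2p\ge 1$; this nontrivial complement is what the argument below exploits.

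Next comes the reduction by congruence. Because $A$ and $B$ are positive semidefinite of rank $p$ with $\mathcal A\cap\mathcal B=\{0\}$, factoring $A=U_AD_AU_A^T$ and $B=U_BD_BU_B^T$ and absorbing the frames $U_A,U_B$ and the scalings $D_A^{1/2},D_B^{1/2}$ into a single $G\in\mathrm{Gl}(n)$ (working in a basis adapted to $\mathcal A\oplus\mathcal B\oplus\mathcal V^\perp$) yields $GAG^T=\tilde A:=\mathrm{diag}(I_p,0,0)$ and $GBG^T=\tilde B:=\mathrm{diag}(0,I_p,0)$. By (P6), $M(A,B)=G^{-1}M(\tilde A,\tilde B)G^{-T}$, so it suffices to prove $M(\tilde A,\tilde B)=0$. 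For the vanishing, write $M:=M(\tilde A,\tilde B)$ in the three blocks. Every $G_Q=\mathrm{diag}(I_p,I_p,Q)$ with $Q\in\mathrm{Gl}(n-2p)$ fixes both $\tilde A$ and $\tilde B$, so (P6) gives $M=G_QMG_Q^T$ for all such $Q$; letting $Q$ range forces every block touching $\mathcal V^\perp$ to vanish, i.e. $M$ is supported on $\mathcal V$. Then the congruence $G_\alpha=\mathrm{diag}(\sqrt\alpha\,I_p,I_p,I)$ sends $(\tilde A,\tilde B)$ to $(\alpha\tilde A,\tilde B)$, and combining (P6) with homogeneity (P2) gives $G_\alpha M G_\alpha^T=M(\alpha\tilde A,\tilde B)=\sqrt\alpha\,M$. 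Comparing the $\mathcal A$- and $\mathcal B$-diagonal blocks yields $\alpha M_{11}=\sqrt\alpha\,M_{11}$ and $M_{22}=\sqrt\alpha\,M_{22}$ for all $\alpha>0$, whence $M_{11}=M_{22}=0$. Since $M$ is positive semidefinite (a limit of positive definite Ando means $\tilde A_\epsilon\#\tilde B_\epsilon$), vanishing diagonal blocks force the off-diagonal block to vanish as well, so $M=0$ and therefore $M(A,B)=0$.

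The delicate points, and where I would focus the write-up, are twofold. The first is making \emph{almost surely} precise: identifying $\mathcal A\cap\mathcal B=\{0\}$ as the generic event and verifying that this is exactly the regime $2p<n$. The second, and the real crux, is that congruence invariance \emph{alone} does not finish the proof: the stabilizer of $(\tilde A,\tilde B)$ only pins $M$ down to the scalar ambiguity $\mathrm{diag}(c_1I_p,c_2I_p,0)$, and it is the interplay of (P6) with homogeneity (P2)---equivalently, the explicit commuting evaluation $(\tilde A_\epsilon\tilde B_\epsilon)^{1/2}=\mathrm{diag}(\sqrt{\epsilon(1+\epsilon)}\,I_p,\sqrt{\epsilon(1+\epsilon)}\,I_p,\epsilon I_{n-2p})\to 0$---that kills the constants. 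I would therefore present the scaling step as the heart of the argument, the transversality count as the origin of the ``almost surely,'' and keep the simultaneous-congruence normalization as a routine lemma.
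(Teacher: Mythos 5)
Your proof is correct, but it takes a genuinely different route from the paper's. The paper disposes of the statement in two lines by citing Ando's Theorem 3.3, which asserts that (P6) forces $\mathrm{range}\,M(A,B)=\mathrm{span}(A)\cap\mathrm{span}(B)$ (proved there by letting a sequence of invertible congruences degenerate to an orthoprojector), and then applies the same transversality count you use: two random $p$-planes in $\RR^n$ meet trivially almost surely precisely when $n-p>p$. You instead reprove the relevant special case of that range theorem from scratch: normalize the generic transverse pair to $\mathrm{diag}(I_p,0,0)$, $\mathrm{diag}(0,I_p,0)$ by a single congruence, kill all blocks touching $(\mathcal A\oplus\mathcal B)^\perp$ via the stabilizer $\mathrm{diag}(I,I,Q)$, kill the two diagonal blocks by playing the scaling congruence $\mathrm{diag}(\sqrt\alpha I,I,I)$ against joint homogeneity, and finish the off-diagonal block by positive semidefiniteness. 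All of these steps check out (the block computations are right, and $n-2p\ge 1$ is exactly what makes the stabilizer nontrivial). What your version buys is self-containedness and an explicit view of where the degeneracy comes from; what it costs is that you invoke (P2) and PSD-valuedness of the mean in addition to (P6) --- though this is a fair trade, since the paper's own route also implicitly needs more than (P6) (the limiting argument behind Ando's Theorem 3.3 requires continuity or monotonicity), and the proposition is anyway stated in the context of a mean satisfying the full axiom list. One small presentational point: your scaling step leaves the $M_{12}$ block untouched (it transforms by $\sqrt\alpha$ on both sides), so the PSD argument is not a cosmetic afterthought but a genuinely necessary final step; you flag this correctly.
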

\begin{proof} In \cite{ando}, it is  proved (Theorem 3.3) that (P6) implies that  the range of the geometric mean of $A_1$ and $A_2$ is the intersection of the subspaces  $\mathrm{span}(A_1)$ and $\mathrm{span}(A_2)$ (this can be proved letting a sequence of matrices of $\mathrm{Gl}(n)$ converge to the orthoprojector on Ker $A_1$).   Since the intersection of two random subspaces of dimension $p$ is almost surely empty as long as $n-p>p$, the range of $\mathrm{span}(A_1)\cap \mathrm{span}(A_2)$ is almost surely the null space, which proves the claim.\end{proof}

A rank-preserving mean thus requires a different approach. We seek to retain most of the properties (P1-P7), but we will see that  three of them  must be relaxed  to define a rank-preserving mean. The major relaxation consists in  choosing a smaller invariance group in (P6), replacing the general linear group   $\mathrm{Gl(n)}$ with the smaller but meaningful  group of scalings and rotations $\RR_+^* \times O(n)$.

\subsection{Mean of an arbitrary number of matrices $A_1,\cdots, A_N$}\label{multmean:sec}
A geometric mean of an arbitrary number of matrices, that extends the geometric mean of two matrices \eqref{Ando} is not very well-established. Indeed the definitions based on equations \eqref{geometricmean} for instance, are not easily generalized.  Several possible definitions have appeared in the literature and we shall not review all of them. In any case, it seems natural to reserve the {adjective}   geometric, to a mean that satisfies the following properties (PP1-PP7). They are a natural  extension of (P1-P7) to the case of three matrices, and the extension to an arbitrary number of matrices is straightforward.  (PP1) if A,B,C commute $M(A,B,C)=(ABC)^{1/3}$.  (PP2)  $M(\alpha A,\beta B,\gamma C)=(\alpha\beta\gamma)^{1/3} M(
A,B,C). $
 (PP3)  $M(A,B,C)=M(\pi(A,B,C))$ for any permutation $\pi$.
 (PP4)  The map $(A,B,C)\mapsto M(A,B,C)$ is monotone.
 (PP5)  If $\{A_n\}$, $\{B_n\}$, $\{C_n\}$ are monotonic decreasing sequences converging to $A,B,C$ then $\lim M(A_n , B_n,C_n)=M(A,B,C)$.
 (PP6) For any $G\in \mathrm{Gl(n)}$ we have $M(GAG^T , GBG^T,GCG^T)=GM(A,B,C)G^T$.
 (PP7)  $M(A,B,C)^{-1}  =M(A^{-1},B^{-1},C^{-1})$.

This axiomatic approach has been proposed {in} \cite{ando}, and  the authors  have defined a mean we shall denote $\mathrm{alm}(A_1,\cdots, A_n)$, adopting the notation of \cite{bhatia-2006}. This mean  is defined as the common limit {of} a converging sequence of  matrices, and it was proved to preserve properties (P1-P7) as well as their extension (PP1-PP7) to three or more matrices. Other computationally more efficient geometric means having the desired properties have since been proposed (see e.g. \cite{Bini}, and \cite{lim2} for a weighted geometric mean).


\section{A geometric viewpoint: geometric mean as a Riemannian mean}

{Ando's mean (\ref{Ando}) has the alternative Riemannian interpretation of the  midpoint of a geodesic connecting the matrices $A$ and $B$. This connection appears for instance in \cite{corach}.  Because this Riemannian interpretation is at the root of our proposed rank-preserving mean, it is reviewed in this section.}

\subsection{Riemannian mean and Karcher mean on a Riemannian manifold}

The arithmetic mean of $N$ positive  numbers in $\RR_+^*$ is
defined as $M(x_1, \cdots,x_N)=\frac{1}{n}\sum_1^nx_i$. It has the
variational property of being the unique minimizer of the sum of
squared distances $M(x_1, \cdots,x_N)=\text{argmin}_x\sum_i
d(x,x_i)^2$ where $d$ is the Euclidean distance in $\RR$. In the
same way, the geometric mean of $n$ positive scalars minimizes the
same sum {if one rather works with the hyperbolic distance}
$d(x,y)=\mid\log x-\log y\mid$.

This variational approach allows to define candidate means of an arbitrary number of matrices on any connected Riemannian manifold   $\mathcal M$. Such manifolds carry the structure of a metric space whose distance function is the arclength of a minimizing path between two points. Indeed the mean of $x_1,\cdots x_N$ on $\mathcal M$, can be defined as the  minimizer of the sum of squares $ \sum_i d(x,x_i)^2$ where $d$ is the geodesic distance on $\mathcal M$, whenever the unique minimizer exists and unique. Such a mean is known as the Riemannian barycenter, of Karcher or Fr\'echet mean. When only two points are involved, the Karcher mean is the midpoint of the minimizing geodesic connecting those two points and it is usually called {the} Riemannian mean. The main advantage of  the Karcher mean is to readily extend any mean that can be defined as a geodesic midpoint, to an arbitrary number  of points. Unfortunately the mean can rarely be given in closed form, and {is typically computed} by an optimization algorithm on the manifold (see e.g. \cite{absil-book} for more information on this branch of optimization).  In \cite{Karcher} it has been shown that the Karcher mean is uniquely defined on manifolds with non-positive sectional curvature everywhere. On arbitrary manifolds with upper bounded sectional curvature, the Karcher mean exists and is unique in geodesic balls with sufficiently small radius \cite{Afsari}.

\subsection{Ando's mean as a Riemannian mean in the cone $\mathrm{P_n}$}

Any positive definite matrix admits  the factorization $A=Y Y^T$, $Y
\in \mathrm{Gl(n)}$, and the factorization is invariant by rotation $Y
\mapsto Y O$. As a consequence,  the cone of positive definite
matrices has a homogeneous representation $\mathrm{Gl(n)} / \mathrm{O(p)}$. The space
is reductive   and thus admits a   $\mathrm{Gl(n)}$-invariant metric
called the natural metric of the cone of positive definite matrices
\cite{faraut}. If $D_1,D_2$ are two tangent vectors at
$A\in\mathrm{P_n}$, the metric is given by $
g^{P_n}_A(D_1,D_2)=\tr{D_1A^{-1}D_2A^{-1}}$. With this definition, a
geodesic (path of shortest length)  at arbitrary $A\in \mathrm{P_n}$
is  \cite{moakher05,smith-2005}:
$\gamma(t)=A^{1/2}\exp(tA^{-1/2}DA^{-1/2})A^{1/2},~ t>0,$ and
the corresponding geodesic distance is
\begin{align*}d_{P_n}(A,B)=
d(A^{-1/2}BA^{-1/2},I)
&=\norm{\log(A^{-1/2}BA^{-1/2})},\\&=\sqrt{\sum_k \log^2(\lambda_k)},\end{align*}where $\lambda_k$ are
the generalized eigenvalues of the pencil $A-\lambda B$, i.e., the roots of
$\det(AB^{-1}-\lambda I)$. The distance is invariant to action by congruence of $\mathrm{Gl(n)}$ and matrix inversion.

The geodesic characterization provides a closed-form expression of the Riemannian
mean of two matrices $A,B\in \mathrm{P_n}$. The geodesic $A(t)$ linking $A$ and $B$ is
$$A(t)=\exp^{P_n}_A(tD)=A^{1/2}\exp(t\log(A^{-1/2}BA^{-1/2}))A^{1/2},$$ where
$D=\log(A^{-1/2}BA^{-1/2})\in \mathrm{Sym(n)}$. The midpoint is obtained
for $t=1/2$: $M(A, B)=A^{1/2}(A^{-1/2}BA^{-1/2})^{1/2}A^{1/2}$  and it corresponds to the definition (\ref{Ando}).

\subsection{Mean of positive definite matrices and Karcher mean in the cone $\mathrm{P_n}$}\label{Grass:subsec}

For $A_1,..A_N\in\mathrm{P_n}$ viewed  as a Riemannian manifold
endowed with the natural metric, the Karcher mean  is defined as a
minimizer of $X\mapsto \sum_1^N d(X,A_i)^2$, i.e. a least squares
solution that we shall denote  $\mathrm{ls}(A_1,\cdots, A_n)$ as in
\cite{bhatia-2006}. The manifold $\mathrm{P_n}$ endowed with the
natural metric is complete, simply connected, and has everywhere a
negative sectional curvature. As a consequence,  the Karcher mean is
uniquely defined. It has been proposed by \cite{moakher06} as a
natural candidate for generalizing the Ando mean to $N$ matrices,
and studied by \cite{bhatia-2006,LL2011,BK2011,holbrook}. It can
mainly be calculated via a simple Newton method on $\mathrm{P_n}$,
or by a stochastic gradient algorithm \cite{Arnaudon}. However,
finding a closed-form expression of the Karcher mean of three or
more matrices of $\mathrm{P_n}$ remains an open question. Several
recent papers address the issue of approximating the Karcher mean
via simple algorithms \cite{ando,petz}.

\subsection{Mean of projectors and Karcher  mean in the Grassmann manifold}\label{Grass:subsec2}

{The Riemmanian approach to the definition of means provides a natural definition for the mean of  $p$-dimensional projectors in $\RR^n$, which forms a subset of $\Sy$:  \begin{align}\label{proj:eq}
\{P\in\RR^{n\times n}/~P^T=P,~P^2=P,~\tr{P}=p\},
\end{align}
This set is in bijection with the Grassmann manifold of
$p$-dimensional subspaces Gr(p,n)  (e.g. \cite{absil-book}). This
manifold can be endowed with a natural Riemannian structure. The
squared distance between two subspaces is merely   the sum of the
squares of the principal angles between those two $p$-planes (see,
e.g. \cite{golub-book} for a definition of principal angles).  The
Riemannian mean of two subspaces is uniquely defined as soon as all
the principal angles between those subspaces are strictly smaller
than $\pi/2$. In other words, the injectivity radius at any point,
i.e. roughly speaking the distance at which the geodesics cease to
be minimizing, is equal to $\pi/2$ on this manifold. The Karcher
mean of $N$ subspaces $S_1,\cdots S_N$ of $\mathrm{Gr}(p,n)$ is
defined as the least squares solution  that minimizes   $  X\mapsto
\sum_1^N d_{\mathrm{Gr(p,n)}} (X,S_i)^2$. The latter function is
equal to $  \sum_{i=1}^N \sum_{j=1}^p \theta_{ij}^2$ where
$\theta_{ij}$ is the $j$-th principal angle between  $X$ and $S_i$.
For $N >2$, there is no closed-form solution for the mean subspace
$X$.  For this reason, the Riemannian mean is often approximated by
the chordal mean, see Section \ref{filtering:sec} and more generally \cite{sarlette}.
As it is well-known  the sectional curvature of the Grassmann
manifold does not exceed 2, and the injectivity radius is $\pi/2$,
we have guarantees that the Karcher mean exists and is uniquely
defined in a geodesic ball of radius less than ${\pi}/{(4\sqrt 2)}$ \cite{Afsari}.

 The Karcher mean  of projectors    in $\mathrm{Gr}(p,n)$ is a natural rank-preserving rotation-invariant mean that is well-defined on a subset of the boundary of the cone. We will use this mean subspace as a basis for the mean of $N$ matrices of $\Sy$. }

\section{A rank-preserving mean of an arbitrary number of matrices of $\Sy$}
\label{sec3}

The proposed extension of the mean from projectors to {arbitrary} matrices of
 $\Sy$ is based on the  decomposition
$$A=UR^2U^T, $$ of any matrix $A\in\Sy$, with $U$ an orthonormal matrix in $\mathrm{St(p,n)}$ and $R^2$ a positive definite matrix in $\mathrm{P_p}$. {This matrix decomposition  emphasizes the geometric interpretation of elements of  $\Sy$  as \emph{flat $p$-dimensional ellipsoids} in $\RR^n$. The flat ellipsoid belongs to a $p$-dimensional subspace spanned by the columns of $U$, which form an orthonormal basis of the subspace, whereas the $p\times p$ positive definite matrix $R^2$ defines  the shape of the ellipsoid in the low rank cone $\mathrm{P_p}$}. For each $1\leq i\leq N$, the matrix decomposition $A_i=U_iR_i^2U_i^T$ is defined up to an orthogonal transformation \begin{align}\label{trans:eq}U_i\mapsto U_iO_i,\qquad R_i^2\mapsto O_i^TR_i^2O_i\, ,\end{align}with $O_i\in\mathrm{O(p)}$ since $$A_i=U_iR_i^2U_i^T=U_iO_i(O_i^TR_i^2O_i)O_i^TU_i.$$ The orthogonal transformations do not affect
the Grassmann Riemannian mean,  but do  affect, in general,  the
mean of the low-rank factors since $M(R_1^2,R_2^2)\neq
M(R_1^2,O^TR_2^2O)$ for arbitrary $O\in\mathrm{O(p)}$, where $M$ denotes the Ando mean. Principal
difficulties for defining a proper geometric mean stem from this
ambiguity.

\subsection{Mean of two matrices $A_1$ and $A_2$}

Let $A_1=U_1R_1^2U_1^T$ and $ A_2=U_2R_2^2U_2^T$ be elements of
$\Sy$. The representatives of the two matrices $(U_i,R_i^2), i=1,2,$
are defined up to an orthogonal transformation $$U_i\mapsto
U_iO,\qquad R_i^2\mapsto O^TR_i^2O.$$All the bases
$U_i\mathrm{O(p)}$ correspond to the same $p$-dimensional subspace
$U_iU_i^T$ (Figure \ref{Fig1}). Note that, this representation of a
$p$-dimensional subspace as the set of bases $U_i \mathrm{O}(p)$ is
at the core of the definition of   the Grassmann manifold
$\mathrm{Gr(p,n)}$  as a quotient manifold
\cite{edelman98}$$\mathrm{Gr(p,n)}\approx\mathrm{St(p,n)}/\mathrm{O(p)}.$$
The equivalence classes $U_i\mathrm{O(p)}$ are called the ``fibers".

 {We will systematically assume that the principal angles between span($U_1$) and span($U_2$) are less than $\pi/2$, which is almost surely true if the subspaces span($U_1$) and span($U_2$) are picked  randomly.  In the case of two matrices, this is sufficient to ensure their Karcher mean in Gr(p,n) is unique. To remove the ambiguity in the definition of a mean of two  matrices of $\Sy$, we propose   to pick two particular representatives  $Y_1=U_1Q_1$ and $Y_2=U_2Q_2$ in the fibers $U_1\mathrm{O(p)}$ and $U_2\mathrm{O(p)}$ by imposing that their distance in $\mathrm{St(p,n)}$ does not exceed the Grassmann distance between the fibers they generate:
 \begin{align}\label{mingrass}d_\mathrm{St(p,n)}(Y_1,Y_2) = d_{\mathrm{Gr(p,n)}} (\mathrm{span}(U_1), \mathrm{span}(U_2))\,,\end{align}
Because the projection from  $\mathrm{St(p,n)}$ to Gr(p,n) is a
Riemannian submersion \cite{absil-book}, and Riemannian submersions
shorten the distances \cite{Gallot-book}, this condition admits the
equivalent formulation: $Y_1=U_1Q_1$ and $Y_2=U_2Q_2$ with
\begin{align}\label{min:eq}(Q_1,Q_2)=\text{argmin}_{(O_1,O_2)\in \mathrm{O(p)}\times \mathrm{O(p)}}d_\mathrm{St(p,n)}(U_1O_1,U_2O_2)\, .
\end{align}
which is illustrated by the  picture of Figure \ref{Fig1}: a
geodesic in the Grasmman manifold admits the representation of a
{\it horizontal} geodesic in $\mathrm{St(p,n)}$, that is, in the
present case, a geodesic whose tangent vector points everywhere to a
direction normal to the fiber.
\begin{figure}
\begin{center}
\includegraphics[scale=0.6]{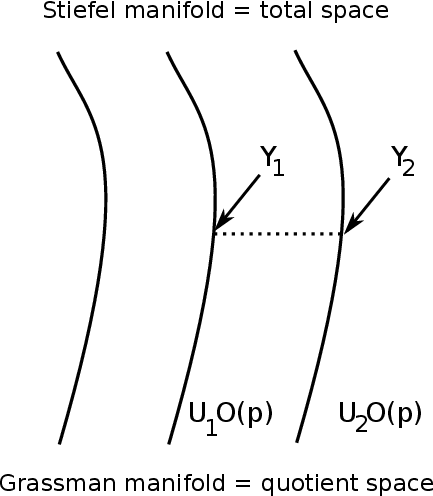}
\caption{$(Y_1,Y_2)$ are two  bases of the subspaces spanned by the
columns of $U_1$ and $U_2$ (the fibers) that minimize the distance
in $\mathrm{St(p,n)}$.  The dashed line represents the shortest path
between those two fibers, thus its horizontal lift (i.e. its
projection) in $ \mathrm{Gr(p,n)}$ viewed as a quotient manifold, is
also a geodesic.} \label{Fig1}
\end{center}
\end{figure}

The following proposition solves the equation \eqref{min:eq}.}

\begin{prop}\label{representatives}
The compact singular value decomposition (SVD) of $U_1^TU_2$ writes
 \begin{equation}
  U_1^T U_2  = O_1 (\cos \Sigma) O_2^T, \label{ou} \end{equation}where
$\Sigma$ is a diagonal matrix whose entries are the principal angles between the $p$ dimensional subspaces spanned by $U_1$ and $U_2$ \cite{golub-book}. If the pair $(O_1,O_2)$ is defined via \eqref{ou}, it is a solution of   \eqref{min:eq}.
\end{prop}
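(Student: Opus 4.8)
The plan is to prove optimality by showing that the SVD pair $(O_1,O_2)$ attains the universal lower bound on $d_{\mathrm{St(p,n)}}$ that the submersion supplies, so that it must be a minimizer. First I would bring the two bases into \emph{canonical position}. Setting $Y_1=U_1O_1$ and $Y_2=U_2O_2$ with $(O_1,O_2)$ taken from \eqref{ou}, a one-line computation gives
\[Y_1^TY_2=O_1^T(U_1^TU_2)O_2=O_1^T\bigl(O_1(\cos\Sigma)O_2^T\bigr)O_2=\cos\Sigma,\]
which is diagonal with entries $\cos\theta_j$. Combined with $Y_1^TY_1=Y_2^TY_2=I_p$, this forces $\langle y_1^{(j)},y_2^{(k)}\rangle=\delta_{jk}\cos\theta_j$, so the $p$ planes $P_j=\mathrm{span}(y_1^{(j)},y_2^{(j)})$ are mutually orthogonal and the two columns inside $P_j$ make precisely the principal angle $\theta_j$.

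Next I would record the lower bound. Since the projection $\mathrm{St(p,n)}\to\mathrm{Gr(p,n)}$ is a Riemannian submersion and submersions shorten distances (both invoked in the text above), every pair $(O_1,O_2)\in\mathrm{O(p)}\times\mathrm{O(p)}$ satisfies
\[d_{\mathrm{St(p,n)}}(U_1O_1,U_2O_2)\ \ge\ d_{\mathrm{Gr(p,n)}}(\mathrm{span}(U_1),\mathrm{span}(U_2))=\sqrt{\textstyle\sum_j\theta_j^2}.\]
Hence it suffices to produce, for the canonical pair, a single path in $\mathrm{St(p,n)}$ from $Y_1$ to $Y_2$ of length exactly $\sqrt{\sum_j\theta_j^2}$, as this forces equality in the submersion bound and makes $(O_1,O_2)$ a minimizer of \eqref{min:eq}.

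The heart of the argument is to build that path explicitly. In each plane $P_j$ I would take the orthonormal frame $e_j=y_1^{(j)}$, $f_j=(y_2^{(j)}-\cos\theta_j\,e_j)/\sin\theta_j$ and rotate at constant speed via $y^{(j)}(t)=\cos(t\theta_j)\,e_j+\sin(t\theta_j)\,f_j$ for $t\in[0,1]$ (holding the $j$-th column fixed when $\theta_j=0$). Stacking the columns into $Y(t)$ yields $Y(0)=Y_1$, $Y(1)=Y_2$ and $Y(t)^TY(t)=I_p$, so the curve stays on $\mathrm{St(p,n)}$; mutual orthogonality of the $P_j$ gives the horizontality $Y(t)^T\dot Y(t)=0$ and the constant speed $\norm{\dot Y(t)}_F=\sqrt{\sum_j\theta_j^2}$, whence its length equals the lower bound.

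The hard part will be confirming that $Y(t)$ is a genuine geodesic of $\mathrm{St(p,n)}$, not merely a short curve; this reduces to checking that its acceleration lies in the normal bundle, i.e. that $\ddot Y(t)=-Y(t)\,\mathrm{diag}(\theta_1^2,\dots,\theta_p^2)$ has the form $Y(t)S$ with $S$ symmetric, which it does. A related point to treat carefully is the metric on $\mathrm{St(p,n)}$: one must verify that the horizontal distribution $\{Z:Y^TZ=0\}$ is carried isometrically onto $T\mathrm{Gr(p,n)}$, so that the length of this horizontal geodesic coincides with $\sqrt{\sum_j\theta_j^2}$ and the submersion bound is indeed tight.
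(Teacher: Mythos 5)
Your proof is correct and rests on the same two pillars as the paper's: the submersion lower bound $d_{\mathrm{St(p,n)}}(U_1O_1,U_2O_2)\ \ge\ d_{\mathrm{Gr(p,n)}}(\mathrm{span}(U_1),\mathrm{span}(U_2))$ valid for \emph{every} pair $(O_1,O_2)$, and the same connecting curve --- your columnwise rotations $y^{(j)}(t)=\cos(t\theta_j)e_j+\sin(t\theta_j)f_j$ in the mutually orthogonal planes $P_j$ are exactly the columns of the paper's $Y(t)=Y_1\cos(\Sigma t)+X\sin(\Sigma t)$, with $X$ assembled from the $f_j$ and the degenerate angles $\theta_j=0$ handled identically. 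Where you genuinely diverge is in how the curve is certified: the paper matches $Y(t)$ against the Edelman--Arias--Smith closed form \eqref{edel:eq} for Grassmann geodesics and concludes that $Y_1,Y_2$ are endpoints of a horizontal geodesic, whereas you compute the length directly, $\int_0^1\lVert\dot Y(t)\rVert_F\,dt=\sqrt{\sum_j\theta_j^2}$, and observe that it meets the lower bound, which forces optimality. Your route is more elementary and self-contained (no appeal to the geodesic formula), and it exposes a point worth noting: the step you label ``the hard part'' --- verifying that $Y(t)$ is a genuine geodesic via the acceleration condition --- is actually dispensable for the minimization claim, since any Stiefel curve from $Y_1$ to $Y_2$ whose length equals the universal lower bound already makes $(O_1,O_2)$ a minimizer of \eqref{min:eq}. (The geodesic property does matter downstream, because the paper reuses $Y(1/2)$ as the Riemannian mean in \eqref{Grassmean:eq}, so the check is not wasted.) Your caveat about the metric is also well taken and is glossed over in the paper: the bound and the length computation are consistent because the horizontal space $\{Z:Y^TZ=0\}$ carries the same Frobenius inner product in the Stiefel metric as the one pushed down to $\mathrm{Gr(p,n)}$.
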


\begin{proof} We use a well-known  result in the Grassmann manifold: the shortest path between two fibers in $\mathrm{St(p,n)}$ coïncides with the geodesic path linking these two fibers in $\mathrm{Gr(p,n)}$,  as the projection on the Grassmann manifold is a Riemannian submersion, and thus shortens the distances (see \cite{Oneill-book,Gallot-book} for results on quotient manifolds). If two bases $Y_1$ and $Y_2$ of the fibers $U_1O(p)$ and $U_2O(p)$ are the endpoints of a geodesic in the Grassmann manifold, they must minimize  \eqref{min:eq}. It is  thus sufficient to prove that $Y_1=U_1O_1$ and $Y_2=U_2O_2$, where $O_1,O_2$ are defined via \eqref{ou}, are the endpoints of a minimizing Grassmann geodesic.

A geodesic in the Grassmann manifold with $Y_1$ as starting point
and $\Delta$ as tangent vector admits the general form
\cite{edelman98}
\begin{align}\label{edel:eq}
\gamma(t)=Y_1V\cos\Theta tV^T+U\sin\Theta t V^T\,,
\end{align} where $U\Theta V^T=\Delta$ is the compact SVD of $\Delta$. We thus propose to consider the following curve
\begin{align*}
Y(t)=Y_1\cos\Sigma t+X\sin\Sigma t\, .
\end{align*}
To define $X$, first assume all principal angles, i.e. all diagonal entries of $\Sigma$, are strictly positive, and let $
X=(Y_2-Y_1\cos\Sigma)(\sin\Sigma)^{-1}$. The curve $Y(t)$ is a geodesic, as it is of the form \eqref{edel:eq} with $\Delta=X\Sigma$ which is a tangent vector as $Y_1^T\Delta=0$ (since $Y_1^TY_2=\cos\Sigma$),  and $X\Sigma$ is a compact SVD as $X^TX=I$. This is because $X^TX=(Y_2^T-\cos\Sigma Y_1^T)(Y_2-Y_1\cos\Sigma)(\sin\Sigma)^{-2}=(I-(\cos\Sigma)^2)(\sin\Sigma)^{-2}=I$ where we used the fact that $Y_2^TY_1=Y_1^TY_2=\cos\Sigma$.  $Y_1$ and $Y_2$ are its endpoints indeed as $Y_2=Y(1)$. If there are null principal angles, it is clear that $Y(t)$ is a geodesic, where $
X=(Y_2-Y_1\cos\Sigma)(\sin\Sigma)^{\dag}$ along the directions corresponding to non-zero principal angles, and where $X$ can be completed arbitrarily with orthonormal vectors along the directions corresponding to null principal angles. Indeed,  along those directions $Y_1$ and $Y_2$, and thus $Y(t)$ coincide, and the value of $X$ does not play any role in the definition of $Y(t)$.

\end{proof}

The following result  allows to understand why the choice of the specific bases $Y_1,Y_2$ is relevant for defining a geometric mean, as explained in the end of this subsection. It proves the rotation of minimal energy (i.e. the closest to identity) mapping the subspace span($A_1$) to span($A_2$) maps $Y_1$ to $Y_2$.

\begin{prop}\label{energy:prop} Let $Y_1= U_1 Q_1$ and $Y_2=U_2 Q_2$ with $(Q_1,Q_2)$ a solution of (\ref{ou}). Then the rotation $R\in\mathrm{SO(n)}$ that maps  the basis $Y_1$ to the basis $Y_2=RY_1$ is a rotation of minimal energy, that is, it minimizes $d_{\mathrm{SO(n)}}(R,I)$ among all rotation matrices that map $Y_1$ to the subspace $\mathrm{span}(U_2)=\mathrm{span}(Y_2)$.
\end{prop}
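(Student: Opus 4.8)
The plan is to reduce the statement to the canonical description of the relative position of the two subspaces in terms of principal angles, and then to compare the energy of $R$ with a Grassmann distance through a Riemannian submersion. First I would record the consequence of \eqref{ou}: since $Y_1=U_1Q_1$ and $Y_2=U_2Q_2$, one has $Y_1^TY_2=Q_1^TU_1^TU_2Q_2=\cos\Sigma$, a diagonal matrix whose entries are the cosines $\cos\theta_k$ of the principal angles $\theta_k=\Sigma_{kk}\in[0,\pi/2)$. Thus the columns $y_1^{(k)}$ of $Y_1$ and $y_2^{(k)}$ of $Y_2$ are paired principal vectors, $\langle y_1^{(k)},y_2^{(j)}\rangle=\delta_{kj}\cos\theta_k$. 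For each $k$ with $\theta_k\in(0,\pi/2)$ I set $w_k=(y_2^{(k)}-\cos\theta_k\,y_1^{(k)})/\sin\theta_k$; a direct computation shows the $w_k$ are unit, mutually orthogonal, and orthogonal to every $y_1^{(j)}$, so the planes $\mathcal P_k=\mathrm{span}(y_1^{(k)},w_k)$ are mutually orthogonal and each $w_k$ lies in $\mathrm{span}(Y_1)^\perp$.

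Next I would exhibit the candidate minimizer explicitly. Let $R_0\in\mathrm{SO(n)}$ act as the planar rotation by the angle $\theta_k$ inside each $\mathcal P_k$ (sending $y_1^{(k)}$ to $y_2^{(k)}$) and as the identity on the orthogonal complement $(\bigoplus_k\mathcal P_k)^\perp$; on the directions with $\theta_k=0$ the two subspaces coincide and $R_0$ is taken to be the identity there. By construction $R_0Y_1=Y_2$, and $R_0=\exp(\Omega_0)$ with $\Omega_0=\sum_k\theta_k\bigl(w_k(y_1^{(k)})^T-y_1^{(k)}w_k^T\bigr)$ skew-symmetric, whose $d_{\mathrm{SO(n)}}(R_0,I)^2$ is proportional to $\sum_k\theta_k^2$ (the geodesic $t\mapsto\exp(t\Omega_0)$ is minimizing since each $\theta_k<\pi$).

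The core of the argument is a lower bound valid for every admissible $R$. I would consider the map $\pi:\mathrm{SO(n)}\to\mathrm{Gr(p,n)}$, $R\mapsto R\,\mathrm{span}(Y_1)$; with the bi-invariant metric on $\mathrm{SO(n)}$ (suitably normalized) this is a Riemannian submersion, hence it shortens distances, so $d_{\mathrm{SO(n)}}(R,I)\ge d_{\mathrm{Gr(p,n)}}(\mathrm{span}(Y_1),\mathrm{span}(Y_2))$ for every $R$ mapping $\mathrm{span}(Y_1)$ onto $\mathrm{span}(Y_2)$, the right-hand side being $\sqrt{\sum_k\theta_k^2}$. It remains to see that $R_0$ attains this bound: the key observation is that $\Omega_0$ is horizontal, since it couples only $\mathrm{span}(Y_1)$ with its orthogonal complement (each $w_k\in\mathrm{span}(Y_1)^\perp$). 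Therefore $t\mapsto\exp(t\Omega_0)$ is a horizontal geodesic whose projection is the Grassmann geodesic joining $\mathrm{span}(Y_1)$ to $\mathrm{span}(Y_2)$; because all principal angles are $<\pi/2$, i.e. within the injectivity radius, this base geodesic is minimizing and its horizontal lift has the same length. Hence $d_{\mathrm{SO(n)}}(R_0,I)=d_{\mathrm{Gr(p,n)}}(\mathrm{span}(Y_1),\mathrm{span}(Y_2))$, so $R_0$ realizes the minimum, which proves the claim.

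I expect the main obstacle to be bookkeeping the metric normalizations so that the directly computed energy of $R_0$ and the submersion lower bound are expressed in the same units (a factor $\sqrt2$ between the Frobenius metric on $\mathrm{SO(n)}$ and the standard Grassmann metric that cancels on both sides), together with verifying that $\Omega_0$ is genuinely horizontal and that $\exp(\Omega_0)=R_0$. The degenerate directions with $\theta_k=0$ require only a trivial separate comment, since there the two subspaces coincide and $R_0$ acts as the identity.
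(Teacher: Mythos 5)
Your proof is correct, but it follows a genuinely different route from the paper's. The paper normalizes $Y_1=[e_1,\dots,e_p]$, restricts the competitors to rotations whose first $p$ columns are $Y_2O$ and which act as the identity elsewhere, identifies $d_{\mathrm{SO(n)}}(R,I)$ with a Stiefel distance, and then falls back on the already-solved minimization \eqref{min:eq} to conclude $O=I$. You instead work directly with the submersion $\mathrm{SO(n)}\to\mathrm{Gr(p,n)}$: you construct the candidate explicitly as $R_0=\exp(\Omega_0)$ with $\Omega_0$ a sum of commuting planar generators through the principal angles, derive the universal lower bound $d_{\mathrm{SO(n)}}(R,I)\ge d_{\mathrm{Gr(p,n)}}(\mathrm{span}(Y_1),\mathrm{span}(Y_2))$ for every admissible $R$ (any rotation sending the basis $Y_1$ into $\mathrm{span}(Y_2)$ lies in the fiber over $\mathrm{span}(Y_2)$), and show the bound is attained because $\Omega_0$ is horizontal and the principal angles stay below the injectivity radius. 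What your version buys is rigor and self-containedness: the paper's reduction is terse and, as literally written, slightly off (a rotation whose first $p$ columns span $\mathrm{span}(Y_2)\neq\mathrm{span}(Y_1)$ cannot coincide with the identity on the remaining columns), whereas your explicit $\Omega_0$ makes both the candidate and its optimality certificate concrete; it also cleanly handles the non-uniqueness of ``the'' rotation mapping $Y_1$ to $Y_2$ by exhibiting the particular extension (identity on $(\bigoplus_k\mathcal P_k)^\perp$) that achieves the minimum. Your closing remark about the $\sqrt2$ normalization is well taken and harmless, since rescaling the bi-invariant metric rescales both sides of the inequality identically and does not affect the minimizer.
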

\begin{proof}
One can assume without loss of  generality that
$Y_1=[e_1,\cdots,e_r]$ where $(e_1,\cdots,e_n)$ is the canonical
basis of $\RR^n$. Moreover, the search space can then be restricted
to the rotations whose $r$ first columns are of the form $Y_2O$,
whereas the $n-r$ remaining columns coincide with the identity
matrix, as the rotation sought must minimize the distance to
identity. Any such rotation mapping $Y_1$ to $Y_2O$ has its first
columns equal to $Y_2O$ and coincides with the identity on the last
$n-r$ columns. Thus we have for any such rotation
$d_{\mathrm{St(n,n)}}(R,I)=d_{\mathrm{St(p,n)}}(Y_2O,I)$. But  as
$\mathrm{SO(n)}=\mathrm{St(n,n)}$ and the  metrics also coincide, we
have $d_{\mathrm{SO(n)}}(R,I)=d_{\mathrm{St(n,n)}}(R,I)$. Thus the
problem boils down to \eqref{min:eq}  and is solved taking $O=I$.
\end{proof}

Having identified some specific representatives  as endpoints of a geodesic in
Gr(p,n), their Riemannian mean in the Stiefel manifold (and in the
Grassmann manifold) is now easily written as the midpoint of the
geodesic:
\begin{align}\label{Grassmean:eq}Y_1\circ Y_2=Y(t=\frac{1}{2})=Y_1\cos\frac{\Sigma}{2}+X\sin\frac{\Sigma}{2}\,.
\end{align}
Note that a weighted mean can be also  computed using  the geodesic
parameterization:
\begin{align}\label{Grassmean:eq2}Y_1\circ Y_2=Y(\alpha)=Y_1\cos(\alpha\Sigma)+X\sin(\alpha\Sigma)\, ,
\end{align}where the weight given to $Y_1$ is $1-\alpha$  and the weight given to $Y_2$ is $\alpha$.

Once $Y_1$ and $Y_2$ have been computed, $R_1$  and $R_2$ are given
by the corresponding representatives\begin{align} R_1^2= Y_1^T A_1
Y_1, \qquad R_2^2= Y_2^T A_2 Y_2. \end{align} The proposed mean of
two matrices $A_1$, $A_2$ is then given by
\[ A_1 \circ A_2= W (R_1^2\#R_2^2) W^T \] where $W$ is  the Riemannian mean of $Y_1$ and $Y_2$ and $R_1^2\#R_2^2$ is the Ando   mean \eqref{Ando} of $R_1^2$ and $R_2^2$ in $\mathrm{P_p}$.

{Proposition \ref{energy:prop} provides a simple geometrical intuition underlying the  definition of the mean: the mean of two flat ellipsoids $A_1$ and $A_2$ is defined in the mean subspace as the geometric mean of two full $p$-dimensional ellipsoids $R_1^2$ and $R_2^2$.   There are several ways to rotate the ellipsoid $A_1$  into the  subspace spanned by $A_2$. Different rotations will yield different respective positions of the two final ellipsoids.  The choice is made univoque and sensible by selecting the minimal rotation. The rotated ellipsoid  then merely  writes $Y_2R_1^2Y_2^T$. Thus $R_1^2$ and $R_2^2$ define the shape of the ellipsoids expressed in the same basis $Y_2$.}   Figure \ref{elli} illustrates the proposed mean of two flat ellipsoids of  $S^+(2,3)$.

\begin{figure}
\begin{center}
\includegraphics[scale=0.35]{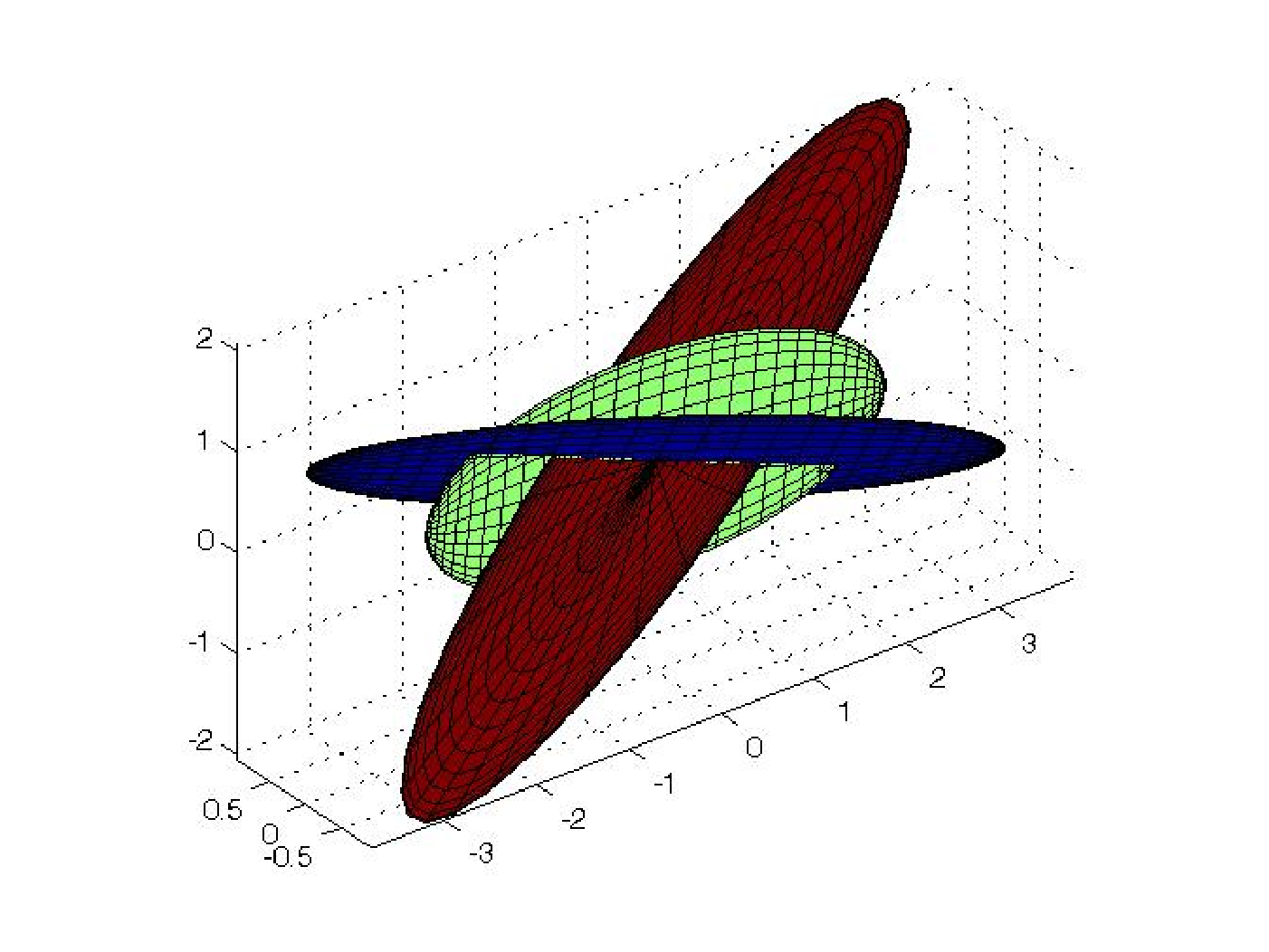}
\caption{Proposed mean in $S^+(2,3)$. The proposed mean is such that both ellipsoids are brought to the mean subspace via a rotation of minimal energy, and then averaged. The resulting mean is a flat ellipsoid, that lives in the mean subspace. }\label{elli}
\end{center}
\end{figure}

\subsection{Generalization to $N$ matrices $A_1, \cdots, A_N\in \Sy$}

 {The construction presented  in the  previous section for two matrices  is now extended to  an arbitrary number of matrices. The main idea is to define a mean $p$-dimensional subspace and to bring   all  flat ellipsoids  $A_1, \cdots, A_N$  to this mean subspace by   a minimal rotation. In the common subspace, the problem  boils  down to compute the geometrical mean of $N$ matrices in $\mathrm{P_p}.$ The construction is achieved through  the following three steps:}

\begin{enumerate}
\item
Let $A_i=U_iR_i^2U_i^T$ for $1\leq i\leq N$.  Suppose that the   subspaces spanned by the columns of the $A_i$'s are enclosed in a geodesic ball of radius less than $\pi/(4\sqrt 2)$ in Gr(p,n). Then define  $W\in$ St(p,n) as an orthonormal basis of the unique Karcher mean of the $U_iU_i^T$'s.

\item
 For each $i$, compute two bases $Y_i$ and $W_i$ of (respectively)  span$(U_i)$ and span$(W)$ such that $d_\mathrm{St(p,n)}(Y_i,W_i) = d_{\mathrm{Gr(p,n)}} (\mathrm{span}(U_i), \mathrm{span}(W))$ i.e. solve problem \eqref{min:eq}.  This is illustrated on Figure \ref{Fig2}. Let $S_i^2=Y_i^TA_iY_i$. The ellipsoid  $A_i$ rotated to the mean subspace  writes
 $
W_iS_i^2W_i^T\, .
$
\begin{figure}
\begin{center}
\includegraphics[scale=0.6]{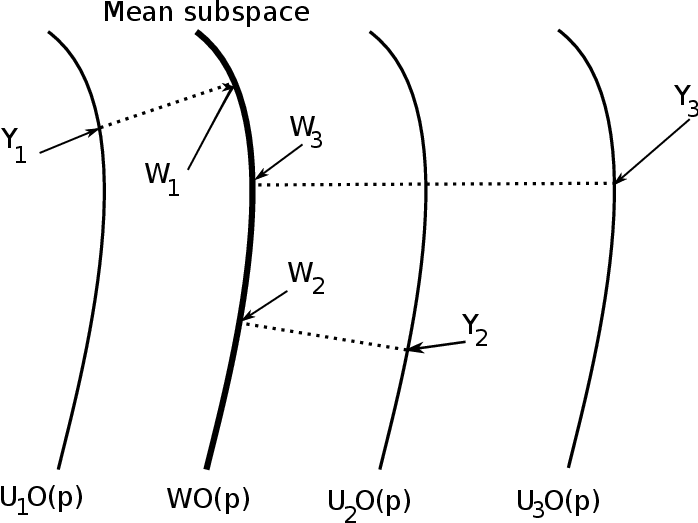}
\caption{The bases $Y_1,\cdots,Y_N$ of the fibers and the bases
$W_1,\cdots,W_N$ of the mean subspace fiber  are such that
$(Y_i,W_i)$ are the endpoints of a geodesic in the Grassmann
manifold.}\label{Fig2}\end{center}
\end{figure}

\item

Let $M$ denote the geometric mean alm or ls on $\mathrm{P_p}$.  For
each $1\leq i\leq N$ let
$T_i^2=W_0^TW_iS_i^2W_i^TW_0\in\mathrm{P_p}$ where
$W_0\in\mathrm{St(p,n)}$ is a fixed basis of the mean subspace.  The
geometric mean of the matrices $A_1,\cdots,A_N$ is defined the
following way:
\begin{align}\label{multimean:def}
A_1\circ...\circ A_N=W_0[M(T_1^2,\cdots,T_N^2)]W_0^T\, .
\end{align}

\end{enumerate}
Those three steps  can summarized as follows: in 1. a
mean subspace is computed, in 2. the ellipsoids are rotated to this
subspace, in 3. they are all expressed in a common basis $W_0$ so
that their geometric mean can be computed in the small dimensional
cone. Note that, although  the  definition \eqref{multimean:def} seems to
depend on the decompositions $A_i=U_iR_i^2U_i^T$ at hand, it will be
proven in the sequel to be invariant to the transformations
\eqref{trans:eq}.  An algorithmic implementation is proposed in Section 6.1.

\section{Properties of the  proposed mean of $N$ matrices of $\Sy$}\label{theoretic:sec}

Throughout this section
\begin{itemize}
\item $M$ will systematically denote one of the geometric means $\mathrm{alm}$ or  $\mathrm{ls}$ on $\mathrm{P_p}$.
\item it will be systematically assumed  the subspaces spanned by the columns of $A_1,...,A_N\in\Sy$ belong to a geodesic ball of radius less than $\pi/(4\sqrt 2)$ in Gr(p,n), so that the Karcher mean of these subspaces is well-defined and unique.
\item with a slight abuse of notation, any projector $UU^T$ where $U\in$St(p,n) will systematically be considered as an element of Gr(p,n), i.e. as a subspace.
\end {itemize}

\subsection{Analytic properties}

\begin{prop}
\label{prop2} On the set of rank $p$ projectors, the mean
\eqref{multimean:def} coincides with the Grassmann Riemannian mean.
On the other hand, when the matrices in $\Sy$ are all supported by the same subspace, \eqref{multimean:def} coincides with the geometric mean
induced by the geometric mean $M$ on the common range subspace of
dimension $p$. More generally \eqref{multimean:def} coincides with
$M$ on the intersection of the ranges.
\end{prop}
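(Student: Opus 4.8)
The plan is to treat the three assertions in turn, exploiting that the construction \eqref{multimean:def} cleanly separates the Grassmann (subspace) part from the shape part computed by $M$ in $\mathrm{P_p}$, and to observe that the first two assertions are the two extreme regimes of the third.

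\textbf{Projector case.} For rank-$p$ projectors $A_i=U_iU_i^T$ the shape factor is $R_i^2=I_p$. Step~1 returns, by definition, a basis $W$ of the Grassmann Karcher mean of the $\mathrm{span}(U_i)$. I then compute the shapes: writing $Y_i=U_iQ_i$ with $Q_i\in\mathrm{O(p)}$, one has $S_i^2=Y_i^TA_iY_i=Q_i^T(U_i^TU_i)^2Q_i=I_p$; and since $W_i$ and $W_0$ both span the mean subspace, $W_iW_i^T=W_0W_0^T$, whence $T_i^2=W_0^TW_iW_i^TW_0=I_p$. By consistency of $M$ (property (PP1)), $M(I_p,\ldots,I_p)=I_p$, so \eqref{multimean:def} reduces to $W_0W_0^T$, the orthoprojector onto the mean subspace, i.e.\ exactly the Grassmann Riemannian mean.

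\textbf{Equal-range case.} If $\mathrm{range}(A_i)=\mathcal U$ for a common $p$-subspace $\mathcal U$, then Step~1 gives $\mathrm{span}(W)=\mathcal U$ (Karcher mean of identical points). Fixing a basis $W_0$ of $\mathcal U$, all principal angles in Step~2 vanish, so solving \eqref{min:eq} via Proposition~\ref{representatives} forces $Y_i=W_i$, both bases of $\mathcal U$. Since $W_iW_i^T$ is then the orthoprojector onto $\mathcal U$ and fixes $A_i$ on both sides, $T_i^2=W_0^TW_i(W_i^TA_iW_i)W_i^TW_0=W_0^T(W_iW_i^T)A_i(W_iW_i^T)W_0=W_0^TA_iW_0$. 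Hence
\[
A_1\circ\cdots\circ A_N=W_0\,M(W_0^TA_1W_0,\ldots,W_0^TA_NW_0)\,W_0^T,
\]
which is precisely the mean induced by $M$ on $\mathcal U$; congruence invariance (PP6) of $M$ shows it is independent of the chosen basis $W_0$.

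\textbf{General intersection.} Set $V=\bigcap_i\mathrm{range}(A_i)$, of dimension $q$. The key first step is to show $V\subseteq\mathrm{span}(W)$: each $\mathrm{range}(A_i)$ lies in the totally geodesic sub-Grassmannian $\{S:V\subseteq S\}\cong\mathrm{Gr}(p-q,n-q)$, and the Karcher mean of points contained in a totally geodesic submanifold (inside the prescribed convex ball) stays in that submanifold, so the mean subspace contains $V$. I would then pick an orthonormal basis $V_0$ of $V$ and complete it to $W_0=[\,V_0\mid\tilde W_0\,]$; along the $q$ directions of $V$ the principal angles between $\mathrm{range}(A_i)$ and $\mathrm{span}(W)$ are zero, so the minimal-rotation representatives agree there, $Y_i=[\,V_0\mid\tilde Y_i\,]$ and $W_i=[\,V_0\mid\tilde W_i\,]$. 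Restricted to $V$ the data thus reproduce the equal-range situation, so the shape mean $M$ — rather than the pure subspace mean of the projector case — governs the behaviour of \eqref{multimean:def} on $V$, with compressions $V_0^TA_iV_0$ playing the role of the $W_0^TA_iW_0$ above.

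\textbf{Main obstacle.} The delicate point, and the one I expect to require the most care, is making the last reduction precise. Computing $T_i^2$ in the basis $[\,V_0\mid\tilde W_0\,]$ shows its leading $q\times q$ block is indeed $V_0^TA_iV_0$, but the off-diagonal block $V_0^TA_i\tilde Y_i$ — the coupling inside $\mathrm{range}(A_i)$ between $V$ and its complement — need not vanish, and the compression of an $M$-mean is not in general the $M$-mean of the compressions. The clean identity ``restriction of the mean $=M$ of the restrictions on $V$'' therefore holds whenever the common directions decouple, which is automatic when $V$ is the full common range (recovering the equal-range case) or when $V$ reduces every $A_i$. I would consequently state the general claim in the robust form that is genuinely provable: $V$ is preserved in the range of the mean, and the shape mean $M$ takes over along $V$, the exact equality with the $M$-mean of the restrictions being the part that hinges on this decoupling.
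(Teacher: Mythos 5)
Your treatment of the first two assertions is correct and amounts to a careful spelling-out of what the paper's proof dismisses as ``obvious'': for projectors every shape factor $T_i^2$ collapses to $I_p$, so \eqref{multimean:def} returns the orthoprojector onto the Karcher mean subspace, and in the equal-range case the vanishing of all principal angles forces $Y_i=W_i$ and reduces the construction to $W_0\,M(W_0^TA_1W_0,\dots,W_0^TA_NW_0)\,W_0^T$. Both computations match the paper's intent, and your remark that (PP6) of $M$ makes the result independent of the choice of $W_0$ is a useful addition.

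For the third assertion you do not produce a proof, and the obstacle you isolate is genuine. The paper's entire argument for that part is the single observation that on the intersection $V$ of the ranges the minimal-energy rotation is the identity. That observation only handles the alignment of the representatives: it shows the principal angles vanish along $V$, so $Y_i$ and $W_i$ can be chosen to agree there, exactly as in your block decomposition $Y_i=[\,V_0\mid\tilde Y_i\,]$. It does not address the point you raise, namely that $\mathrm{alm}$ and $\mathrm{ls}$ on $\mathrm{P_p}$ are invariant under full congruences $G\cdot G^T$ with $G$ invertible but do not commute with compression to a $q\times q$ corner, so that $V_0^T\,M(T_1^2,\dots,T_N^2)\,V_0\neq M(V_0^TT_1^2V_0,\dots,V_0^TT_N^2V_0)$ in general --- already for the Ando mean of two $2\times 2$ matrices, one of them with a nonzero off-diagonal entry coupling $V$ to its complement inside the range. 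Since the off-diagonal block $V_0^TA_i\tilde Y_i$ has no reason to vanish, the clean identity ``restriction of the mean equals $M$ of the restrictions'' holds only under the decoupling you describe (e.g.\ when $V$ reduces every $A_i$, or when $V$ is the whole common range, which recovers the second assertion). Your proposal to state the general claim in that weaker, provable form is the honest resolution; the paper's one-line justification leaves exactly this gap open, so the shortfall lies in the statement as much as in your attempt.
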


\begin{proof}The first two properties are obvious.  The last one is linked to the special choice of a  minimal energy rotation. Indeed, on the intersection of the ranges,  the rotation of  minimal energy  is the identity.\end{proof}

The next proposition proves that the {proposed mean} inherits the several  properties of a geometric mean in the cone. {For the reasons explained in the introduction of the paper}, Properties (PP1) and (PP6-PP7), defined for the mean of three or more matrices in Subsection \ref{multmean:sec}, must be adapted as follows:  (PP1') if $A,B,C$ commute and are supported by the same subspace, then $(A\circ B\circ C)=(ABC)^{1/3}$. (PP6')  For $(\mu,P)\in \RR^*_+ \times \mathrm{O(n)}$ we have $(\mu P^TA\mu P)\circ (\mu P^TB\mu P)\circ (\mu P^TC\mu P)=\mu P^T(A\circ B\circ C ) \mu P$. (PP7')  $(A\circ B\circ C)^{\dag}  =A^{\dag}\circ B^{\dag}\circ C^{\dag}$.

\begin{prop}
\label{prop3} The mean \eqref{multimean:def} with $M=\mathrm{alm}$ or with   $M=\mathrm{ls}$ is well-defined, and  deserves the appellation ``geometric" as it satisfies the properties (PP1'), (PP2-PP5),  and (PP6'-PP7').
\end{prop}

\begin{proof}

(PP1'):  All the $A_i$'s  have the same range. On this common range,
the mean has been proven to coincide with $M$ (Proposition
\ref{prop2}). It thus inherits the (PP1) property. $M$ satisfies
(PP2) and so does \eqref{multimean:def}. To prove permutation
invariance (PP3) it suffices to note that both Grassmann mean and
$M$ are permutation invariant. To prove (PP4), suppose $A_i\leq
A_i^0$ for each $i$. Then $A_i$ and $A_i^0$  have the same range
and  can be written $A_i=U_i R_i^2 U_i^T$ and $A_i^0=U_i R_{i0}^2 U_i^T$. The respective means have the same range, and (PP4)
is then a mere consequence of the monotonicity of $M$. Note that, the monotonicity property in the full rank case
was proved for  $M$=alm in \cite{ando} and  it was first conjectured
for $M$=ls in \cite{bhatia06}, and several proofs were then proposed in
\cite{BK2011,LL2011,holbrook}. Using the same arguments, one can
prove continuity from above of the mean is a consequence of
continuity of $M$. (PP7') can be easily proved noting that for each
$i$ the pseudo-inverse writes $A_i^\dag=U_iR_i^{-2}U_i^T$. Thus the
calculation of the mean of the pseudo-inverse yields the inverse
$T_i^{-1}$ of  $T_i$ and (PP7') is the consequence of
self-duality of $M$.

(PP6'): As for all $\mu>0$ and $i$ we have $\mu A_i=Y_i(\mu
R_i^2)Y_i^T$ invariance with respect to scalings is a mere
consequence of the invariance of $M$. Let $O\in\mathrm{O(n)}$. The
mean subspace in Grassmann of the rotated  ranges of the $(OA_iO^T)$'s is
the rotated mean subspace of the ranges of the $A_i$'s. Proposition
\ref{representatives} says that $Y_i^TW_i=\cos\Sigma$. But for every
$i$ we have $(Y_i^TO^T)(OW_i)=Y_i^TW_i=\cos\Sigma$. Thus the
matrices are transformed according to $W_i\mapsto O W_i$  and the
$T_i's$ are unchanged. The mean of the rotated matrices is thus
$OW_0~M(T_1^2,...,T_N^2)~W_0^TO^T$.\end{proof}

\subsection{The proposed geometric mean as a    Karcher mean in a special case}

In the recent work \cite{bonnabel-sepulchre-simax}, the authors proposed an extension of the affine-invariant metric of the cone to $\Sy$. In this subsection, we explore the links between the Karcher mean in the sense of this metric  and the proposed mean \eqref{multimean:def}. We underline the fact that   the proposed mean is \emph{not} the Karcher mean in the cone.  Yet, we prove that  both means coincide in the meaningful case where all the matrices are rank $p$ projectors.

The metric introduced in  \cite{bonnabel-sepulchre-simax} is as follows. If $(U,R^2)\in \mathrm{St(p,n)}\times \mathrm{P_p}$ represents $A\in\Sy$,  the tangent vectors at $A$ are represented by the infinitesimal variation $(\Delta, D)$, where
\begin{equation}\label{tangent:space}
\begin{aligned}\Delta&=U_\perp B,\quad\quad B\in \RR^{(n-p)\times p},\\D&=RD_0R,\end{aligned}
\end{equation} such that $U_\perp\in \mathrm{St(n,n-p)}$ , $U^TU_\perp=0$, and $D_0\in \mathrm{Sym(p)}$ belongs the tangent space to $\mathrm{P_p}$ at identity. Vectors of the form \eqref{tangent:space} constitute a subset of tangent vectors to the  total space $\mathrm{St(p,n)}\times \mathrm{P_p}$. This subset is called  the horizontal space, and is defined such that each tangent vector of the horizontal space defines a unique tangent vector in the quotient $\Sy$ (i.e. the horizontal space is transverse to the fibers). The chosen metric of $\Sy$ needs only be defined on the horizontal space, and is merely
the weighted sum of the infinitesimal distance in $\mathrm{Gr(p,n)}$ and  in $\mathrm{P_p}$:
\begin{align}\label{metric:def} {g_{k}}_{(U,R^2)}((\Delta_1,D_1),(\Delta_2,D_2))=\tr{\Delta_1^T\Delta_2}+{{k}}~\tr{R^{-1}D_1R^{-2}D_2R^{-1}},~{{k}}>0,
\end{align}generalizing $g^{P_n}$ in a natural way. The space $\Sy\cong (\mathrm{St(p,n)}\times \mathrm{P_p})/\mathrm{O(p)}$ endowed with the metric \eqref{metric:def} is a complete Riemannian manifold, and the metric is invariant to orthogonal transformations, scalings, and pseudo-inversion.

\begin{prop}\label{geo:prop}
Consider $N$ rank $p$ projectors $U_1U_1^T,\cdots, U_NU_N^T\in \Sy$. Then the mean  \eqref{multimean:def} is the Karcher mean of $U_1U_1^T,\cdots, U_NU_N^T$ in the sense of metric \eqref{metric:def}.
\end{prop}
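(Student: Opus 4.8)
The plan is to reduce the statement to Proposition \ref{prop2}: since that proposition shows that the proposed mean \eqref{multimean:def} of rank $p$ projectors is the Grassmann Riemannian (Karcher) mean $WW^T$, where $W\in\mathrm{St(p,n)}$ is a basis of the mean subspace, it suffices to prove that the Karcher mean of $U_1U_1^T,\dots,U_NU_N^T$ for the metric \eqref{metric:def} is exactly $WW^T$. First I would exploit the fact that $g_k$ has no cross term between the subspace block $\Delta$ and the shape block $D$ of \eqref{tangent:space}: it is the orthogonal sum of the Grassmann metric $\tr{\Delta_1^T\Delta_2}$ and $k$ times the affine-invariant metric of $\mathrm{P_p}$. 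As established in \cite{bonnabel-sepulchre-simax}, this block structure makes the geodesic distance split, so that for $A=U_AR_A^2U_A^T$ and $B=U_BR_B^2U_B^T$ one has
\[
d_{g_k}^2(A,B)=d_{\mathrm{Gr}}^2(\mathrm{span}(U_A),\mathrm{span}(U_B))+k\,d_{\mathrm{P_p}}^2(R_A^2,\widehat{R}_B^2),
\]
where $\widehat{R}_B^2$ denotes $R_B^2$ expressed in the basis obtained from $U_B$ by the minimal rotation of Proposition \ref{representatives} aligning it with $U_A$.

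The second step is the observation that trivializes the shape block for projectors. A projector $U_iU_i^T$ has shape $R_i^2=I$, and $I$ is a fixed point of the $\mathrm{O(p)}$ congruence $R^2\mapsto O^TR^2O$ defining the fibers of $\Sy\cong(\mathrm{St(p,n)}\times\mathrm{P_p})/\mathrm{O(p)}$. Hence, writing a candidate mean as $X=VS^2V^T$, the aligned shape of each projector is still $I$, and since the $\mathrm{P_p}$ distance is invariant under congruence by $\mathrm{O(p)}$, the shape term $d_{\mathrm{P_p}}^2(S^2,I)$ depends neither on $i$ nor on the chosen alignment. The Karcher cost function therefore separates:
\[
\sum_{i=1}^N d_{g_k}^2(X,U_iU_i^T)=\sum_{i=1}^N d_{\mathrm{Gr}}^2(\mathrm{span}(V),\mathrm{span}(U_i))+N\,k\,d_{\mathrm{P_p}}^2(S^2,I),
\]
where the first summand depends only on the subspace $\mathrm{span}(V)$ and the second only on the shape $S^2$.

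I would then minimize the two contributions independently. The shape term is nonnegative and vanishes exactly when $S^2=I$, while the subspace term is, by definition, minimized precisely by the Grassmann Karcher mean, which is unique under the standing geodesic-ball assumption; since the two variables are decoupled, the joint minimizer is the product of the two individual minimizers. The unique minimizer is thus $X=W\,I\,W^T=WW^T$, which by Proposition \ref{prop2} is the proposed mean, proving the claim. A more synthetic route to the fact that the minimizer must be a projector is to note that pseudo-inversion $A\mapsto A^\dag$ is an isometry of $g_k$ whose fixed-point set is exactly the set of projectors ($R^2=R^{-2}$ forces $R^2=I$); being the fixed set of an isometry it is totally geodesic, and an isometry that fixes every data point must fix their (unique) Karcher mean, which hence lies on this totally geodesic copy of $\mathrm{Gr(p,n)}$.

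The step I expect to be the main obstacle is the distance splitting invoked in the first paragraph: justifying rigorously that the geodesics of $g_k$ decouple into a Grassmann geodesic and a $\mathrm{P_p}$ geodesic is delicate, because $\Sy$ is a twisted quotient bundle rather than a genuine Riemannian product, the $\mathrm{O(p)}$ action mixing the two factors. What rescues the projector case is precisely that the shape coordinate of every data point sits at the $\mathrm{O(p)}$-fixed point $I$, so the fiber contribution collapses to the single, alignment-independent quantity $d_{\mathrm{P_p}}^2(S^2,I)$ and the coupling disappears; one may even avoid the full splitting formula by using only the inequality $d_{g_k}(A,B)\ge d_{\mathrm{Gr}}(\mathrm{span}(A),\mathrm{span}(B))$, obtained by projecting any connecting path onto the subspace factor, together with the equality case $S^2=I$.
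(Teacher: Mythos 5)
Your headline argument rests on an exact splitting of the squared distance, $d_{g_k}^2(A,B)=d_{\mathrm{Gr(p,n)}}^2+k\,d_{\mathrm{P_p}}^2$, and this is a genuine gap: the identity is false in general, and the paper itself explains why. Although the metric \eqref{metric:def} is block-diagonal on the horizontal space, the quotient is by the \emph{diagonal} $\mathrm{O(p)}$-action on $\mathrm{St(p,n)}\times\mathrm{P_p}$, so the two blocks do not decouple at the level of geodesics: a horizontal curve whose components are separately a Grassmann geodesic and a $\mathrm{P_p}$ geodesic is not a geodesic of $\Sy$, and (Proposition 1 of \cite{bonnabel-sepulchre-simax}, recalled at the end of Section 5) there exist geodesics joining two matrices with the same range whose midpoint leaves that range --- impossible if your splitting held, since the subspace term would then pin the geodesic inside that range. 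Consequently the separated-cost display in your second paragraph is unproven even with projector data; what is available cheaply is only the one-sided bound $d_\Sy(A,B)\ge d_{\mathrm{Gr(p,n)}}(\mathrm{span}(A),\mathrm{span}(B))$, obtained by dropping the $D$-term from the length integrand and using that $\mathrm{St(p,n)}\to\mathrm{Gr(p,n)}$ is a Riemannian submersion. Your pseudo-inversion argument is an attractive alternative for showing the minimizer is a projector, but it too is incomplete as stated: an isometry fixing the data only permutes the \emph{set} of Karcher means, so you would first need uniqueness of the Karcher mean for \eqref{metric:def}, which neither you nor the paper establishes.

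The good news is that your closing remark is exactly the paper's proof, and it suffices. The paper shows (Lemma \ref{lem1}) that $F(X)=\sum_i d_\Sy^2(U_iU_i^T,X)\ge G(\mathrm{span}(X))\ge G(WW^T)$, where $G$ is the Grassmann Karcher cost, and then (the subsequent lemma) that equality is attained at $X=WW^T$: between two projectors, the horizontal curve built from the aligned representatives of Proposition \ref{representatives} with constant shape factor $I$ has length equal to the Grassmann distance, whence $F(WW^T)=G(WW^T)\le F(X)$ for all $X$. If you wish to salvage an additive statement, Minkowski's integral inequality applied to the length integrand does give the \emph{lower} bound $d_\Sy^2\ge d_{\mathrm{Gr(p,n)}}^2+k\,d_{\mathrm{P_p}}^2$ (shape factors compared modulo $\mathrm{O(p)}$), which combined with achievability for projectors yields the same conclusion; but the equality you assert for general $A,B$ should not be claimed.
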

The proof of this proposition is based on two lemmas. Indeed,
this result stems from the fact that  \eqref{multimean:def}  is of the form $WW^T$, where this latter projector is the Karcher mean of the $N$ projectors in the sense of the Gr(p,n) natural metric. This means $WW^T$ is the minimizer of the cost $G(VV^T)=\sum_i d_{\mathrm{Gr(p,n)}}^2(U_iU_i^T,VV^T)$. But the Karcher mean in $\Sy$ is defined as the minimizer of the cost $F(X)=\sum_i d_\Sy^2(U_iU_i^T,X)$. The first following lemma, proves that $F(X)\geq G(\text{span}(X))$ for all $X\in\Sy$. Thus for all $X$ we have $F(X)\geq G(WW^T)$.
 But the second following lemma  proves that  $G(WW^T)=F(WW^T)$. As a result, $WW^T\in\Sy$ minimizes $F$ indeed.

  \begin{lem}\label{lem1}  The distance between arbitrary $A_1,A_2$ in $\Sy$ is lower  bounded by the distance between their ranges in the Grassmann manifold:  $d_\Sy(A_1,A_2)\geq d_{\mathrm{Gr(p,n)}}(\mathrm{span}(A_1),\mathrm{span}(A_2))$   \end{lem}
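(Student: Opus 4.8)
The plan is to exploit the fact that the metric \eqref{metric:def} is a sum of a Grassmann term $\tr{\Delta_1^T\Delta_2}$ and a nonnegative shape term $k\,\tr{R^{-1}D_1R^{-2}D_2R^{-1}}$. This structure means that the canonical projection $\pi:\Sy\to\mathrm{Gr(p,n)}$ sending $A=UR^2U^T$ to its range $\mathrm{span}(A)=UU^T$ is distance non-increasing, and the stated inequality is precisely the assertion that $\pi$ shortens distances. I would establish this by a direct comparison of curve lengths rather than by invoking abstract submersion theory, so that the argument is self-contained.

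First I would let $\gamma(t)=(U(t),R(t)^2)$, $t\in[0,1]$, be an arbitrary piecewise smooth curve in $\Sy$ joining $A_1$ to $A_2$, and consider its projection $\bar\gamma(t)=\mathrm{span}(U(t))$, a curve in $\mathrm{Gr(p,n)}$ joining $\mathrm{span}(A_1)$ to $\mathrm{span}(A_2)$. Representing the velocity $\dot\gamma(t)$ by its horizontal form $(\Delta(t),D(t))$ as in \eqref{tangent:space}, with $\Delta=U_\perp B$ and $D=RD_0R$, the differential of $\pi$ sends $(\Delta,D)$ to the Grassmann tangent vector represented by $\Delta$ alone, since the shape component $D$ lies in the kernel of $d\pi$. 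The Grassmann squared norm of this image is $\tr{\Delta^T\Delta}$, because $U_\perp^TU_\perp=I$. Hence at every $t$ one has the pointwise inequality
$$\|\dot{\bar\gamma}(t)\|_{\mathrm{Gr(p,n)}}^2=\tr{\Delta^T\Delta}\leq \tr{\Delta^T\Delta}+k\,\tr{R^{-1}DR^{-2}DR^{-1}}=\|\dot\gamma(t)\|_{g_k}^2,$$
the extra term being nonnegative since, writing $D=RD_0R$ with $D_0$ symmetric, it equals $k\,\tr{D_0^2}\geq 0$, i.e. the squared $\mathrm{P_p}$-norm of $D$.

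Integrating this inequality over $[0,1]$ gives $L_{\mathrm{Gr(p,n)}}(\bar\gamma)\leq L_\Sy(\gamma)$ for every such curve. Since $d_{\mathrm{Gr(p,n)}}(\mathrm{span}(A_1),\mathrm{span}(A_2))\leq L_{\mathrm{Gr(p,n)}}(\bar\gamma)$ by definition of the geodesic distance, taking the infimum over all curves $\gamma$ joining $A_1$ to $A_2$ yields $d_{\mathrm{Gr(p,n)}}(\mathrm{span}(A_1),\mathrm{span}(A_2))\leq d_\Sy(A_1,A_2)$, which is the claim. Equivalently, one may observe that $\pi$ is a Riemannian submersion onto $\mathrm{Gr(p,n)}$ equipped with its natural metric: its vertical space is the set of pure shape variations $(0,D)$, its $g_k$-orthogonal complement is precisely the horizontal set of vectors $(\Delta,0)$, and on the latter $g_k$ restricts to the Grassmann metric. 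One then invokes the standard fact, already used elsewhere in this paper, that Riemannian submersions shorten distances \cite{Gallot-book}.

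The main obstacle I anticipate is the bookkeeping of the two nested quotient structures: $\Sy\cong(\mathrm{St(p,n)}\times\mathrm{P_p})/\mathrm{O(p)}$ on the one hand, and the fibration $\Sy\to\mathrm{Gr(p,n)}$ that forgets the shape on the other. One must check that the velocity of a curve in $\Sy$ is correctly represented in the horizontal form \eqref{tangent:space} and that $d\pi$ genuinely annihilates the shape component $D$ while acting as the identity on $\Delta$; once this identification is secured, the pointwise norm comparison and the length integration are routine.
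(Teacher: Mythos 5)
Your proof is correct and follows essentially the same route as the paper: both arguments rest on the observation that the $g_k$-norm of a horizontal velocity $(\Delta,D)$ dominates the norm of its Grassmann projection, because the shape term $k\,\tr{R^{-1}DR^{-2}DR^{-1}}=k\,\tr{D_0^2}$ is nonnegative, so dropping the shape component can only shorten curve lengths. The only cosmetic differences are that the paper compares the horizontal lift of the geodesic with the curve $U(t)$ in $\mathrm{St(p,n)}$ and then invokes the Stiefel-to-Grassmann submersion, whereas you project arbitrary curves directly to $\mathrm{Gr(p,n)}$ and take an infimum at the end, which is marginally more careful since it does not presuppose that a minimizing geodesic in $\Sy$ exists.
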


  \begin{proof}
  A horizontal curve $(U(t),R(t))$ has length  $\int_0^1\sqrt{{g_{k}}_{(U(t),R(t))}(\dot U(t),\dot R(t))}dt$. For two matrices $A_1,A_2\in\Sy$, consider the  horizontal lift $(U(t),R(t))$ of the geodesic linking $A_1$ and $A_2$ in $\Sy$ in the sense of metric \eqref{metric:def}. As the horizontal vector $(\dot U(t),\dot R(t) )$ has a longer norm than the horizontal vector $(\dot U(t),0 )$,   we have $ d_\Sy(A_1,A_2)\geq d_{\mathrm{St(p,n)}}(U(0),U(1))$. Besides, $U(t)$ defines a curve in $\mathrm{St(p,n)}$ linking span($A_1$) and span($A_2$).  As the projection from the Stiefel manifold to the Grassmann manifold viewed as a quotient space $\mathrm{Gr(p,n)}\simeq \mathrm{St(p,n)}/\mathrm{O(p)} $  is a Riemannian submersion, it shortens the distances, i.e.
$d_{\mathrm{Gr(p,n)}}(\text{span}(A_1),\text{span}(A_2))\leq d_{\mathrm{St(p,n)}}(U(0),U(1))$.  This proves the result.
  \end{proof}

  \begin{lem}
  In the particular case where $A_1,A_2$ in $\Sy$ are two projectors, the geodesic joining them in $\Sy$  coincides with the geodesic joining their ranges in $\mathrm{Gr(p,n)}$. It implies $d_\Sy(A_1,A_2)= d_{\mathrm{Gr(p,n)}}(\mathrm{span}(A_1),\mathrm{span}(A_2))$.
  \end{lem}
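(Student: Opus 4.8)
The plan is to prove both the geodesic coincidence and the resulting distance equality by exploiting the explicit parameterization of Grassmann geodesics from \eqref{edel:eq} together with the product structure of the metric \eqref{metric:def}. The key observation is that a projector $A=UU^T$ corresponds to the representative $(U,R^2)$ with $R^2=I_p$, so the ``shape'' component sits exactly at the identity of $\mathrm{P_p}$. I would first note that along any horizontal curve the metric \eqref{metric:def} splits into the Stiefel (equivalently Grassmann) part $\tr{\Delta_1^T\Delta_2}$ and the $\mathrm{P_p}$ part weighted by $k$, and that these two contributions are independent. The candidate geodesic joining the two projectors is the purely horizontal Grassmann geodesic lifted to $\mathrm{St(p,n)}$, i.e. the curve $Y(t)=Y_1\cos\Sigma t+X\sin\Sigma t$ of Proposition \ref{representatives} with the shape factor frozen at $R^2(t)\equiv I_p$.

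The main steps I would carry out are as follows. First, I would verify that the curve $(Y(t),I_p)$ is horizontal in the sense of \eqref{tangent:space}: its tangent $(\dot Y(t),0)$ has $D=0$, and $\dot Y(t)=X\Sigma$ satisfies $Y(t)^T\dot Y(t)=0$ since $Y(t)$ stays in $\mathrm{St(p,n)}$ and the Grassmann geodesic is horizontal by construction, so the curve is an admissible horizontal lift. Second, I would argue that this curve is actually a \emph{geodesic} of $\Sy$ for the metric \eqref{metric:def}: because the metric is a weighted product and the shape component is held constant at the identity (a geodesic of $\mathrm{P_p}$ with zero velocity is trivially geodesic), while the subspace component traces a Grassmann geodesic, the combined curve minimizes length among horizontal curves. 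Here I would invoke the fact, already embedded in the construction, that the projection $\mathrm{St(p,n)}\times\mathrm{P_p}\to\Sy$ is a Riemannian submersion, so horizontal geodesics downstairs lift to horizontal geodesics, and conversely the Grassmann geodesic between the two fibers realizes the minimal horizontal path. Third, once geodesic coincidence is established, the length of this curve is exactly $\int_0^1\sqrt{\tr{\dot Y^T\dot Y}}\,dt=d_{\mathrm{Gr(p,n)}}(\mathrm{span}(A_1),\mathrm{span}(A_2))$ because the $k$-weighted $\mathrm{P_p}$ term vanishes identically, giving the claimed equality $d_\Sy(A_1,A_2)=d_{\mathrm{Gr(p,n)}}(\mathrm{span}(A_1),\mathrm{span}(A_2))$. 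Combined with the reverse inequality already supplied by Lemma \ref{lem1}, this pins down the distance.

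The step I expect to be the main obstacle is the second one: rigorously certifying that the frozen-shape Grassmann lift is genuinely length-minimizing in $\Sy$, rather than merely a horizontal curve whose length equals the Grassmann distance. The subtlety is that a shorter path in $\Sy$ could in principle dip into the $\mathrm{P_p}$ directions and return, trading subspace motion for shape motion in a way that lowers total length. To rule this out I would rely on the lower bound of Lemma \ref{lem1}, which shows no curve can do better than the Grassmann distance; since the candidate curve achieves exactly this lower bound, it must be a minimizing geodesic, and the two geodesics therefore coincide. In other words, the hard analytic work is already discharged by Lemma \ref{lem1}, and the proof of this lemma reduces to exhibiting the frozen-shape lift and checking that its length matches the Grassmann distance, which is a direct computation from the parameterization $Y(t)=Y_1\cos\Sigma t+X\sin\Sigma t$.
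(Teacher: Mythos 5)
Your proposal is correct and follows essentially the same route as the paper: exhibit the horizontal lift of the Grassmann geodesic with the shape factor frozen at $I_p$, observe that its length equals $d_{\mathrm{Gr(p,n)}}(\mathrm{span}(A_1),\mathrm{span}(A_2))$ because the $k$-weighted $\mathrm{P_p}$ term vanishes, and invoke the lower bound of Lemma \ref{lem1} to conclude it is minimizing. The extra verifications you carry out (horizontality of the lift, the submersion structure) are sound but not needed beyond what the paper's two-line argument already uses.
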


\begin{proof}
One can find a horizontal curve in $\Sy$ whose length is
$d_{\mathrm{Gr(p,n)}}(A_1,A_2)$, by choosing representatives in
$\mathrm{St(p,n)}$ as in Proposition \ref{representatives}. It is
thus a geodesic in Grassmann, but also in $\Sy$ because of Lemma
\ref{lem1}.
\end{proof}

Beyond the particular  case of projectors, it must be emphasized
that the mean  \eqref{multimean:def} is { not} the Karcher mean   in
the sense of metric \eqref{metric:def}. This is because a horizontal
curve $(U(t),R(t))$ that is made of a geodesic $U(t)$ in $\mathrm{Gr(p,n)}$
and of a geodesic $R(t)$ in $\mathrm{P_p}$ does \emph{not} define a geodesic
in $\mathrm{St(p,n)}$.  For instance, it is possible to construct  a
geodesic joining matrices having the same range, and such that the
mid-point does not have the same range (see
\cite{bonnabel-sepulchre-simax}, Proposition 1). This
counter-example shows Proposition \ref{prop2}, although very
natural, is not satisfied by the Karcher mean, as the mean of
matrices having the same range does not boil down to their geometric
mean within this range. Even if the metric seems natural, and has
been successfully used in several applications (see e.g.
\cite{JMLR,Springer}),  the resulting Karcher mean lacks elementary
expectable properties that the mean \eqref{multimean:def} possesses.


\section{Application to filtering}\label{filtering:sec}

\subsection{Algorithmic implementation and computational cost}

Here we recap the basic steps for  an implementation of the mean.
The calculation of the mean has a numerical complexity   of order
$O(np^2)$. This cost is linear with respect to $n$, a very desirable
feature  in large-scale applications  where $n \gg p$.

\begin{enumerate}
\item For $1\leq i\leq N$ let $U_i$ be  any orthonormal basis of the span of $A_i$.

\item Let $W$ be an orthonormal basis  of the subspace that is the Karcher mean in the Grassmann manifold between the associated subspaces. Instead of minimizing $  \sum_{i=1}^N \sum_{j=1}^p \theta_{ij}^2$, an sensible alternative is to minimize $  \sum_{i=1}^N \sum_{j=1}^p (\sin \theta_{ij})^2$, which corresponds to approximate the angular distance by a chordal distance in $\mathrm{Gr(p,n)}$. Both definitions are asymptotically equivalent for small principal angles. In this case, $W$ can be defined as an orthonormal basis of  the solution subspace, which was shown  in \cite{sarlette} to be the $p$-dimensional dominant subspace of the centroid $\sum_{i=1}^N U_iU_i^T$, and which can easily be found by truncated SVD.

\item For $1\leq i\leq N$
\begin{itemize}
\item The SVD of $U_i^TW$ yields two orthogonal matrices $O_i,O_i^W$ such that $O_i^TU_i^TWO_i^W$ is a diagonal matrix.
\item  Let $Y_i=U_iO_i$ and $W_i=WO_i^W$. Let $S_i^2=Y_i^TA_iY_i$. Let $T_i^2=W^TW_iS_i^2W_i^TW$.
\end{itemize}
\item Compute the geometric mean in the low-rank cone $M(T_1^2,\cdots,T_k^2)$ using methods in the literature \cite{ando, Arnaudon,Bini}.
\item The geometric mean is: $
W~M(T_1^2,\cdots,T_k^2)~W^T.
$
\end{enumerate}

\subsection{Geometric means and filtering applications}

Filtering on $S^+(n,n)$ with the metric \eqref{metric:def} (which is the GL(n)-invariant metric of the cone $\mathrm{P_n}$) was studied extensively for diffusion tensor images (DTI) filtering in \cite{pennec-06,fletcher,arsigny}. It was also applied to signal processing in \cite{smith-2005}, and also seems to be promising in radar processing \cite{barbaresco}. One of the main benefits of this metric is its invariance with respect to scalings which makes it very robust to outliers, i.e. large noise, as the effect of a large eigenvalue is mitigated by the geometric mean. This very  property, which is desirable for means in the interior of the cone, yields a great lack of robustness to (even small) noises as soon as some matrices are rank-deficient. The mean \eqref{multimean:def} inherits the invariance to scalings property, which yields robustness to outliers, without being subject to the same problems in case of rank deficiency, as illustrated by the following proposition.

\begin{prop}
Let $A\in\Sy$, and $R_\epsilon$ be a rotation of magnitude $\epsilon$. If $\mathrm{span}(R_\epsilon A)\cap
\mathrm{span}(A)=\emptyset$, which can be the case with arbitrary small $\epsilon>0$ as soon as $p<n/2$, the Ando mean of $A$ and $ R_\epsilon A$ is the null matrix according to Proposition \ref{pp1}. On the other hand,  $A\circ R_\epsilon A\to A$ when $\epsilon\to 0$.
\end{prop}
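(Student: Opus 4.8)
The statement has two independent halves, and the first is essentially a restatement of earlier results. Since $\mathrm{span}(A)\cap\mathrm{span}(R_\epsilon A R_\epsilon^T)=\emptyset$ (here I read $R_\epsilon A$ as the rotated ellipsoid $R_\epsilon A R_\epsilon^T\in\Sy$, consistent with the action in (P6')), the characterization used in the proof of Proposition~1, namely Ando's Theorem 3.3 stating that under (P6) the range of the geometric mean equals the intersection of the two ranges, immediately forces Ando's mean of $A$ and $R_\epsilon A R_\epsilon^T$ to have trivial range, hence to equal the null matrix. So the only real content is the convergence $A\circ R_\epsilon A R_\epsilon^T\to A$, and I would establish it by a continuity argument for the two-matrix instance of the construction \eqref{multimean:def}.

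First I would fix a decomposition $A=UR^2U^T$ with $U\in\mathrm{St(p,n)}$ and $R^2\in\mathrm{P_p}$, and observe that the rotated matrix factors as $R_\epsilon A R_\epsilon^T=(R_\epsilon U)R^2(R_\epsilon U)^T$. Thus the two inputs share the \emph{same} shape matrix $R^2$, and their ranges are $\mathrm{span}(U)$ and $R_\epsilon\,\mathrm{span}(U)$, whose principal angles tend to $0$ as $\epsilon\to0$. The plan is then to run the three steps of the construction and show that each produces data converging, as $\epsilon\to0$, to the data obtained from the degenerate pair $(A,A)$, for which Proposition~\ref{prop2} together with idempotency $X\#X=X$ of the Ando mean yields $A\circ A=A$.

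Concretely I would argue as follows. For $\epsilon$ small the two subspaces lie in an arbitrarily small geodesic ball around $\mathrm{span}(A)$, well inside the injectivity radius $\pi/2$ of $\mathrm{Gr(p,n)}$; hence the mean subspace $W$ of Step~1 is uniquely defined, being the midpoint of the Grassmann geodesic, and $\mathrm{span}(W)\to\mathrm{span}(U)$. By Proposition~\ref{energy:prop} the minimal-energy rotations bringing $\mathrm{span}(U)$ and $R_\epsilon\,\mathrm{span}(U)$ onto $\mathrm{span}(W)$ tend to the identity as the angles collapse, so the rotated shape matrices $S_1^2=Y_1^TAY_1$ and $S_2^2=Y_2^T(R_\epsilon A R_\epsilon^T)Y_2$, once expressed in the fixed basis $W_0$ as $T_1^2,T_2^2$, both converge to the common matrix $W_0^TAW_0\in\mathrm{P_p}$. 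Since the Ando mean is jointly continuous and idempotent, $M(T_1^2,T_2^2)\to W_0^TAW_0$, whence $A\circ R_\epsilon A R_\epsilon^T=W_0\,M(T_1^2,T_2^2)\,W_0^T\to W_0W_0^TAW_0W_0^T=A$, using that the limiting projector $W_0W_0^T\to UU^T$ fixes $A$ (indeed $UU^TA=AUU^T=A$).

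The delicate point, and the step I expect to require the most care, is the joint continuity of the representative-selection map near $\epsilon=0$. The closed forms of Proposition~\ref{representatives} involve $(\sin\Sigma)^{-1}$, which degenerates precisely in the limit of vanishing principal angles, the defining SVD of $U_1^TU_2$ is not unique there, and the bases $Y_i,W_i$ are determined only up to the orthogonal ambiguity $\mathrm{O(p)}$. I would handle this by noting that the final output $A\circ R_\epsilon A R_\epsilon^T$ is independent of all these choices, which is exactly what makes \eqref{multimean:def} well-defined, so it suffices to track the genuinely invariant objects: the mean subspace $\mathrm{span}(W)$ and the congruence class of $(T_1^2,T_2^2)$, each of which varies continuously as the two subspaces contract to $\mathrm{span}(A)$. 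Equivalently, one may extend the construction continuously across the diagonal $\epsilon=0$ by completing $X$ along null-angle directions as in the proof of Proposition~\ref{representatives}, and then invoke continuity of the composite map together with the value $A\circ A=A$ at $\epsilon=0$.
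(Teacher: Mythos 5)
The paper does not actually prove this proposition: it is stated in the filtering section as an assertion, with the first half delegated to Proposition~1 and the convergence $A\circ R_\epsilon A\to A$ left entirely unjustified. So there is nothing to compare against; what you have written is a proof where the paper supplies none, and it is essentially correct. Your reading of $R_\epsilon A$ as the congruence $R_\epsilon A R_\epsilon^T$ is the right one (the literal product is not even symmetric), the first half is indeed just Proposition~1 via Ando's range-intersection theorem, and the reduction of the second half to continuity of the three-step construction at the diagonal, combined with idempotency $X\# X=X$ and Proposition~\ref{prop2}, is the natural argument. You also correctly identify the only delicate point, namely that the representative selection via the SVD of $U_1^TU_2$ degenerates as the principal angles vanish. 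One statement is slightly too strong as written: the matrices $T_1^2,T_2^2$ need not themselves converge to $W_0^TAW_0$, since the orthogonal factors $Q_i,O_i^W$ live in the compact group $\mathrm{O(p)}$ and may fail to converge; what converges is the output $W_0\,M(T_1^2,T_2^2)\,W_0^T$, because along any convergent subsequence of the orthogonal factors the limit is $U Q Q^T R^2 Q Q^T U^T=A$ independently of the subsequential limit $Q$, and compactness then gives convergence of the full family. You acknowledge exactly this in your last paragraph by passing to the invariant data, so the argument closes; I would just make the subsequence-plus-compactness step explicit rather than leaving it at the level of "varies continuously".
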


This proposition shows that the Ando mean of a stream of noisy measurements $ R_\epsilon A$  of the low rank matrix $A$, is generally the null matrix, even with arbitrarily small noises, whereas it should be close to $A$. On the other hand, it is indeed close to $A$ when the rank-preserving metric proposed in this paper is used. This appears to be a fundamental feature in filtering applications.

\subsection{An application to diffusion tensor images (DTI)}

The tools developed in this paper can be applied to the
processing of diffusion tensor images. These images represent the
diffusion of water in the brain and are considered as representative
of nervous fibres. Each point of the image contains a matrix
belonging to $S^+(3,3)$, but some of them are highly anisotropic. In
this case, a good approximation of the tensors can be defined
by considering on one hand the dominant direction of diffusion, which is an element of $\mathrm{S}^+(1,3)$, and on the other hand the non-dominant flat
ellipsoid which is an element of $\mathrm{S}^+(2,3)$. The smoothing of these images
is performed by an extension of the Perona-Malik algorithm
\cite{peronamalik}, which we decouple into a filtering problem on
the Grassmann manifold of $1$-dimensional subspaces and a filtering
problem on the set $\mathrm{S}^+(2,3)$. Consider a slice image (2D). In the
Grassmann manifold, using the chordal distance, the discrete Perona-Malik
algorithm becomes
\[ \mathbf{V}_{i,j}^{t+1}= \mathbf{V}_{i,j}^t+ \lambda \bigl(c_N \bigtriangledown_N \mathbf{V}+ c_S \bigtriangledown_S \mathbf{V} + c_E \bigtriangledown_E \mathbf{V} + c_W \bigtriangledown_W \mathbf{V}\bigr)|^t_{i,j}\,\]
where $\mathbf{V}$ denotes the principal eigenvector of the tensor under consideration, and $\nabla$ denotes a  difference with the north, south, east, or west nearest neighbor
\begin{eqnarray*}
\bigtriangledown_N \mathbf{V}_{i,j} &=& \mathbf{V}_{i-1,j}- \mathbf{V}_{i,j}, \quad
\bigtriangledown_S \mathbf{V}_{i,j} = \mathbf{V}_{i+1,j}-\mathbf{V}_{i,j},\\
\bigtriangledown_E \mathbf{V}_{i,j} &=& \mathbf{V}_{i,j+1}- \mathbf{V}_{i,j}, \quad
\bigtriangledown_W \mathbf{V}_{i,j} = \mathbf{V}_{i,j-1}-\mathbf{V}_{i,j},
\end{eqnarray*}
and the coefficients are defined by  $c_{N{i,j}}^t= g(|\bigtriangledown_N \mathbf{V}_{i,j}^t|)$, e.g. for the north direction. $g$ is a well-chosen function \cite{peronamalik} that allows to diffuse (and thus regularize) along  the directions of low gradient but not  along the directions of high gradient. This technique  preserves the edges in the image, and thus prevents from blurring the shapes. The dominant eigenvalue is regularized with the usual algorithm for scalar images.


The results of this filtering algorithm, that is here adapted to a
``multiscale" decomposition of the tensors' eigenvalues, are
illustrated in Figure \ref{dtimages} and following figures, where we
can see that the fiber (in red) is well reconstructed by this
method.

\begin{figure}
\begin{center}
\includegraphics[scale=0.6]{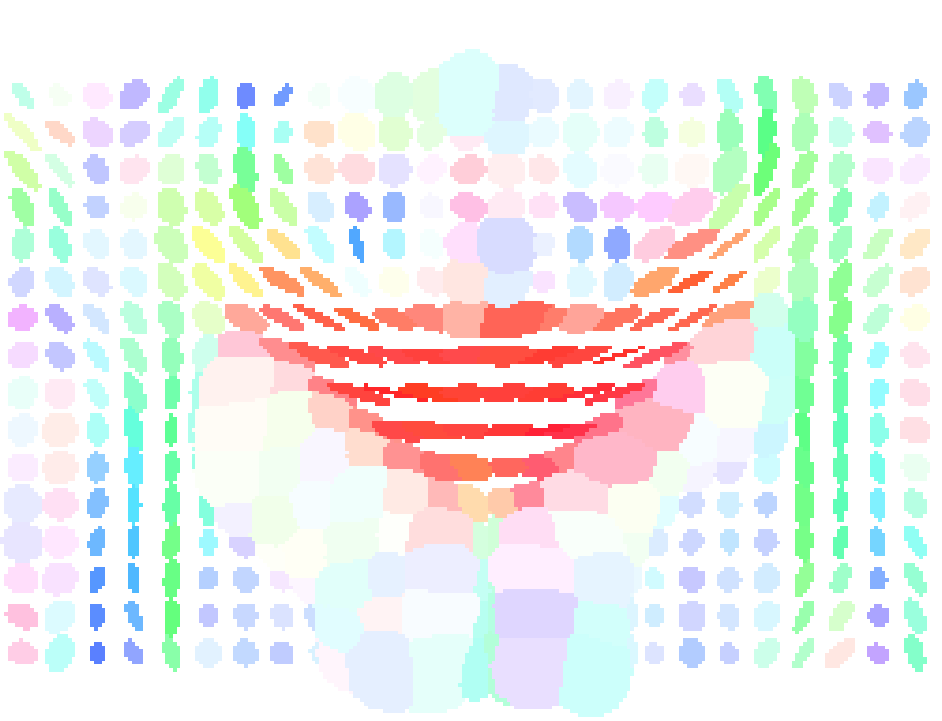}
\caption{Implementation of a Perona-Malik filter on  a Diffusion
Tensor Image. (a): Slice of a Diffusion Tensor Image: zoom on a
fiber of the corpus callosum (in red). Image courtesy of the
Cyclotron Research Centre of the University of
Li\`ege.}\label{dtimages}
\end{center}
\end{figure}

\begin{figure}
\begin{center}
\includegraphics[scale=0.6]{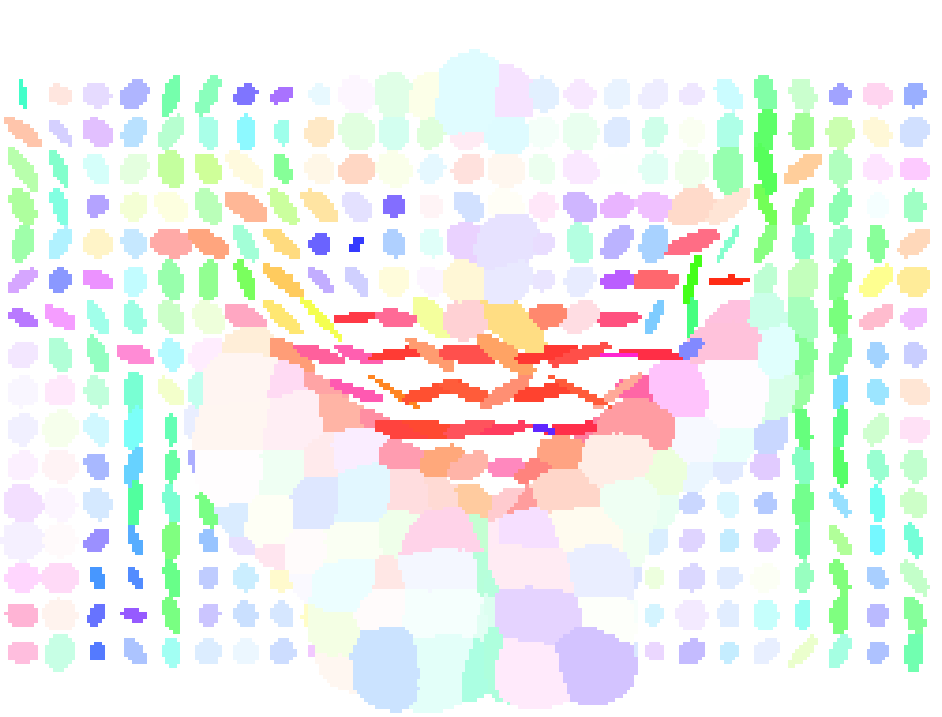}
\caption{Implementation of a Perona-Malik filter on  a Diffusion
Tensor Image. (b): Noise wad added on (a). }
\end{center}
\end{figure}
\begin{figure}
\begin{center}
\includegraphics[scale=0.6]{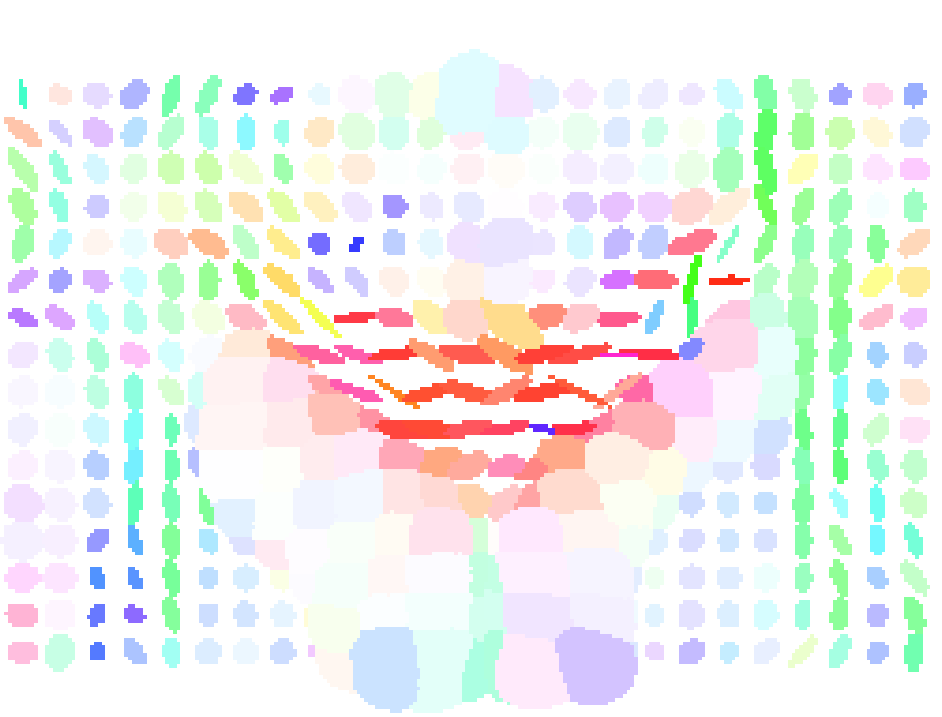}
\caption{Implementation of a Perona-Malik filter on  a Diffusion
Tensor Image. (c): Result of the regularization with the natural
metric on $S^+(3)$. }
\end{center}
\end{figure}
\begin{figure}
\begin{center}
\includegraphics[scale=0.6]{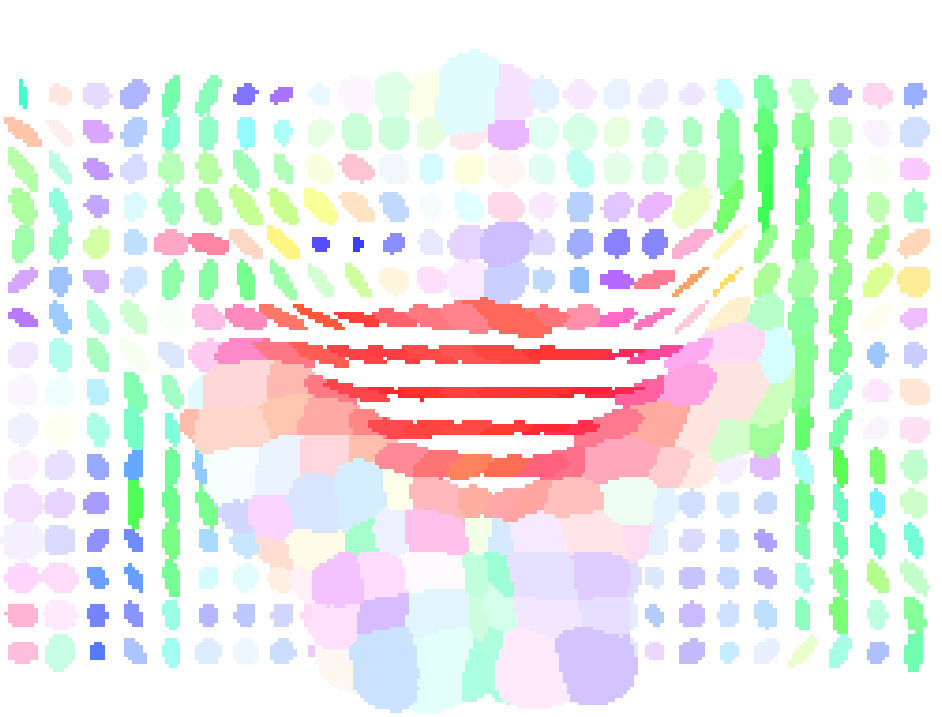}
\caption{Implementation of a Perona-Malik filter on  a Diffusion
Tensor Image. (d): Result of the regularization with an
approximation of the matrices, viewed as a dominant direction of
diffusion coupled with a non dominant flat ellipsoid. The figure
suggests that this regularization performs better than the first one
in the present reconstruction of the fibre.}
\end{center}
\end{figure}

\section*{Acknowledgements}
This paper presents research results of the Belgian Network DYSCO (Dynamical Systems, Control, and Optimization), funded by the Interuniversity Attraction Poles Programme, initiated by the Belgian State, Science Policy Office. The scientific responsibility rests with its authors.


\bibliographystyle{plain}


\end{document}